\theoremstyle{definition}
\newtheorem{definition}{Definition}[section]
\newtheorem{example}[definition]{Example}
\newtheorem{question}[definition]{Question}
\newtheorem{remark}[definition]{Remark}
\theoremstyle{plain}
\newtheorem{lemma}[definition]{Lemma}
\newtheorem{proposition}[definition]{Proposition}
\newtheorem{theorem}[definition]{Theorem}
\newtheorem{corollary}[definition]{Corollary}
\begin{document}

\title{Poisson triple systems}

\author{Murray R. Bremner}

\address{Department of Mathematics and Statistics,
University of Saskatchewan,
Saskatoon, Canada}

\email{bremner@math.usask.ca}

\author{Hader A. Elgendy}

\address{Department of Mathematics, Faculty of Science, Damietta University, New Damietta, Egypt}

\email{haderelgendy42@hotmail.com}

\subjclass[2010]{Primary 17B63. Secondary 17A30, 17A40, 17B35, 18M70, 68W30}

\keywords{Poisson algebras, Poisson triple systems, universal enveloping algebras,
algebraic operads, Koszul operads, computer algebra}

\thanks{The first author was supported by the Discovery Grant \emph{Algebraic Operads} from NSERC,
the Natural Sciences and Engineering Research Council of Canada.
The second author was supported by a Postdoctoral Fellowship from
the Ministry of Higher Education of Egypt.}

\begin{abstract}
We introduce Poisson triple systems, 
which are vector spaces with 3 trilinear operations 
satisfying 9 polynomial identities of degree 5.
We show that every Poisson triple system has a universal enveloping Poisson algebra.   
Finally, we briefly discuss operadic aspects of Poisson triple systems.
\end{abstract}

\maketitle



\section{Introduction}

Poisson algebras have two binary operations: 
a commutative associative operation, denoted $a \cdot b$,
and an anticommutative operation satisfying the Jacobi identity, denoted $[a,b]$.
The bracket acts by derivations of the commutative product.

\begin{definition}
\label{classical}
A \emph{Poisson algebra} is a vector space with bilinear operations $a \cdot b$ and $[a,b]$ 
satisfying these relations:
\begin{alignat*}{2}
& 1) &\quad
& a \cdot b = b \cdot a
\\
& 2) &\quad
& [a,b] + [b,a] = 0
\\
& 3) &\quad
&(a \cdot b) \cdot c = a \cdot (b \cdot c)
\\
& 4) &\quad
&[[a,b],c] + [[b,c],a] + [[c,a],b] = 0
\\
& 5) &\quad
&[a,b \cdot c] = [a,b] \cdot c + b \cdot [a,c]
\end{alignat*}
\end{definition}

Markl \& Remm \cite{MR} developed the notion of polarization and depolarization of
binary operations, which leads to an equivalent definition of Poisson algebras
in terms of a single binary operation (with no symmetry) satisfying a single relation.

\begin{definition}
Given a bilinear operation $ab$, the corresponding \emph{polarized} operations
are defined by
\[
a \cdot b = \tfrac12 ( ab + ba ),
\qquad
[a,b] = \tfrac12 ( ab - ba ).
\]
Conversely, given two bilinear operations,
one commutative $a \cdot b$ and one anticommutative $[a,b]$,
the corresponding \emph{depolarized} operation
is defined by
\[
ab = a \cdot b + [a,b].
\]
\end{definition}

\begin{definition}
\label{oneoperation}
A \emph{Poisson algebra} is a vector space with
a bilinear operation $ab$ satisfying this relation:
\[
a ( b c )
=
( a b ) c
-
\tfrac13
\big\{
(a c) b - (b a) c + (b c) a - (c a) b
\big\}.
\]
\end{definition}

\begin{lemma}
The two definitions of Poisson algebra are equivalent.
\end{lemma}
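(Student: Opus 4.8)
The plan is to verify directly that depolarization turns Definition~\ref{classical} into Definition~\ref{oneoperation} and that polarization turns Definition~\ref{oneoperation} back into Definition~\ref{classical}.

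First I would record that the two constructions are mutually inverse on the relevant operations, so that relations 1) and 2) are automatic. Starting from a single operation $ab$ and setting $a\cdot b=\tfrac12(ab+ba)$, $[a,b]=\tfrac12(ab-ba)$ visibly produces a commutative operation and an anticommutative one whose sum is again $ab$; conversely, if $\cdot$ is commutative and $[\,,\,]$ is anticommutative, then $\tfrac12\big((a\cdot b+[a,b])+(b\cdot a+[b,a])\big)=a\cdot b$ and $\tfrac12\big((a\cdot b+[a,b])-(b\cdot a+[b,a])\big)=[a,b]$. Thus only relations 3), 4), 5) remain to be matched with the single relation of Definition~\ref{oneoperation}.

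For the implication Definition~\ref{classical} $\Rightarrow$ Definition~\ref{oneoperation}, I would substitute $ab=a\cdot b+[a,b]$ into the single relation and expand every product by bilinearity. Each resulting monomial falls into one of four shapes according to its outer and inner operations: \emph{dot--dot}, \emph{dot--bracket}, \emph{bracket--dot}, \emph{bracket--bracket}. Collecting by shape, a direct computation shows that the dot--dot terms sum to a nonzero multiple of the associator $(a\cdot b)\cdot c-a\cdot(b\cdot c)$ and the bracket--bracket terms sum to a nonzero multiple of $[[a,b],c]+[[b,c],a]+[[c,a],b]$; these vanish by relations 3) and 4). For the two mixed shapes I would use the Leibniz rule 5) to rewrite every bracket--dot term $[x,y\cdot z]$ as a sum of dot--bracket terms, after which the mixed contributions cancel identically. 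Hence the single relation holds.

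For the converse Definition~\ref{oneoperation} $\Rightarrow$ Definition~\ref{classical}, I would polarize and again expand the single relation $R(a,b,c)=0$ into the four shapes; the point is that as \emph{formal} expressions the four shape-classes are linearly independent, and the $S_3$-action permuting $a,b,c$ preserves each class. Applying the total antisymmetrization $\sum_{\sigma\in S_3}\mathrm{sgn}(\sigma)\,R^{\sigma}$ annihilates the dot--dot class and both mixed classes, leaving a nonzero multiple of the Jacobi expression; since $R\equiv 0$ in the algebra this gives relation 4). The total symmetrization $\sum_{\sigma}R^{\sigma}$ isolates a multiple of the symmetric combination $[a,b\cdot c]+[b,c\cdot a]+[c,a\cdot b]$, which is the trivial-representation part of relation 5). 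The remaining content of relations 3) and 5) lives in the two-dimensional standard representation of $S_3$, and I expect this to be the main obstacle: associativity and the mixed-symmetry part of the Leibniz rule both contribute there, so they must be separated by evaluating $R$ at suitable permutations and forming the combinations selected by the corresponding Young symmetrizers. Carrying out this finite linear-algebra step — equivalently, checking that the $S_3$-submodule generated by $R$ equals the span of relations 3)--5) — completes the proof; a dimension count confirms the bookkeeping, since the free single-operation space in arity $3$ is $12$-dimensional while the Poisson operad has dimension $6$ in arity $3$, so exactly a $6$-dimensional space of relations must be matched, consistent with $2+1+3$ for associativity, Jacobi, and Leibniz.
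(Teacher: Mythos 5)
Your proposal is correct in outline but takes a genuinely different route from the paper: the paper does not prove this lemma at all, it simply cites Markl \& Remm \cite[Example 2]{MR}, so your direct verification is the more self-contained argument. Your forward direction is complete and checks out: writing $R$ for the difference of the two sides of the relation in Definition~\ref{oneoperation} and substituting $xy = x\cdot y + [x,y]$, the dot--dot terms collect to $\tfrac43\big(a\cdot(b\cdot c)-(a\cdot b)\cdot c\big)$, the bracket--bracket terms to $-\tfrac23\big([[a,b],c]+[[b,c],a]+[[c,a],b]\big)$, and the mixed terms to $\tfrac23\,L(a;b,c)+\tfrac43\,L(c;a,b)$ where $L(x;y,z)=[x,y\cdot z]-[x,y]\cdot z-[x,z]\cdot y$, so $R$ vanishes by relations 3)--5). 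For the converse your $S_3$-plan is sound and the count $2+1+3=6$ is right, but two points deserve care. First, the assertion that total antisymmetrization ``annihilates both mixed classes'' is not literally true of the classes: the span of the monomials $[x,y]\cdot z$ contains a copy of the sign representation, spanned by $[a,b]\cdot c+[b,c]\cdot a+[c,a]\cdot b$; what is true is that the mixed part of $R$ lies in the Leibniz submodule, which is isomorphic to $\mathrm{triv}\oplus\mathrm{std}$ and hence has no sign component. Second, the standard-representation step you defer is the real content and should be carried out explicitly: the two standard components of $R$ are $\tfrac43 A(a,b,c)$ with $A(a,b,c)=a\cdot(b\cdot c)-(a\cdot b)\cdot c$ and $-\tfrac23\big(L(b;a,c)-L(c;a,b)\big)$, and under the natural identifications of both copies of the standard representation with the sum-zero part of the permutation module on $\{a,b,c\}$ these correspond to the linearly independent vectors $e_a-e_c$ and $e_b-e_c$; this is exactly what forces the cyclic $S_3$-module generated by $R$ to contain two full copies of the standard representation rather than a single diagonal copy, and hence to equal the whole $6$-dimensional space of Poisson relations. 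With that verification written out, your argument is a complete proof.
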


\begin{proof}
Markl \& Remm \cite[Example 2]{MR}.
\end{proof}

\begin{example}\label{Ex0}
Every Lie algebra $L$ is a Poisson algebra with respect to the zero associative product: 
$a \cdot  b = 0$.
\end{example}

\begin{example}\label{Ex1}
The commutative associative polynomial ring $\mathbb{F}[X, Y]$
becomes a Poisson algebra by defining the bracket as follows:
\[
[f,g] 
= 
\frac{\partial f} {\partial X}  \frac{\partial g} {\partial Y}  
-    
\frac{\partial f } {\partial Y}  \frac{\partial g} {\partial X}.
\]
\end{example}

\begin{example}
Let $M$ be a smooth manifold and let $C^{\infty}$ denote the real algebra of 
smooth real-valued functions on $M$, where the multiplication is defined pointwise. 
We call $M$ a Poisson manifold if there is a bilinear map   
$[-,-]\colon C^{\infty}(M) \times C^{\infty}(M) \to C^{\infty}(M)$ 
which defines a structure of Poisson algebra on $C^{\infty} (M) $.
\end{example}

For a basic reference on Poisson structures, see \cite{Laurent-Gengoux}.
Poisson algebras originated in the study of Hamiltonian mechanics \cite{Poisson}
and Poisson structures on smooth manifolds \cite{Lichnerowicz,Marsden}. 
For more applications of Poisson algebras in physics and geometry, 
see \cite{Crainic1,Fernandes,KOSMANN,Weinstein}.
Poisson algebras have also been studied from the points of view of 
nonassociative algebra \cite{GR2008,MLS2012,MLU2007,Shestakov,ZCB} and
algebraic operads \cite{Chapoton,Dotsenko-2007,Fresse}. 

A triple system is a vector space $T$ together with a trilinear map $T^3 \to T$.  
One may mention associative triple systems \cite{Lister}, Lie triple systems \cite{ListerLie}, 
Jordan triple systems \cite{Neher}, and Leibniz triple systems \cite{BSO}.
Triple systems are rich in algebraic structures, 
and they provide important common ground for various branches of mathematics, 
not only pure algebra and differential geometry, 
but also representation theory and algebraic geometry. 

This paper is organized as follows.
Section \ref{PTS} introduces Poisson triple systems (PTS),
which are related to Poisson algebras in the same way that associative triple systems
are related to associative algebras. 
We define a PTS to be a vector space with three trilinear operations satisfying 9 polynomial identities of degree 5 (Definition \ref{PTSdefinition}).
Section \ref{computational} discusses the computational methods that we used
to determine the defining identities for PTS.
Section \ref{enveloping} shows that every PTS has
a universal enveloping Poisson algebra, and that the enveloping algebra is finite-dimensional
if the original PTS is finite-dimensional. 
In Section \ref{operads} we use results of Dotsenko, Markl and Remm \cite{DMR}
to related PTS to the notion of Veronese powers of operads.
Throughout we work over a field $\mathbb{F}$ of characteristic 0.


\section{Poisson triple systems}
\label{PTS}

\begin{definition}
\label{ternarydefinition}
In any Poisson algebra we consider three ternary operations
defined in terms of the binary operations as follows:
\[
\langle a, b, c \rangle = a \cdot b \cdot c,
\qquad
( a, b, c ) = [ a, b ] \cdot c,
\qquad
[ a, b, c ] = [ [ a, b ], c ].
\]
\end{definition}

\begin{remark}
We do not consider $[ a \cdot b, c ]$ since it can be expressed using $( a, b, c )$:
\[
[ a \cdot b, c ]
=
[ a, c ] \cdot b + a \cdot [ b, c ]
=
[ a, c ] \cdot b + [ b, c ] \cdot a
=
( a, c, b ) + ( b, c, a ).
\]
\end{remark}

\begin{lemma}
\label{degree3symmetries}
Every symmetry (polynomial identity in degree 3) satisfied by the operations
of Definition \ref{ternarydefinition} is a consequence of the following relations:
\begin{alignat*}{2}
&\langle a, b, c \rangle = \langle a^\sigma, b^\sigma, c^\sigma \rangle \quad ( \sigma \in S_3 ),
&\quad\quad\quad
&(a,b,c) + (b,a,c) = 0,
\\
&[a,b,c] + [b,a,c] = 0,
&\quad\quad\quad
&[a,b,c] + [b,c,a] + [c,a,b] = 0.
\end{alignat*}
\end{lemma}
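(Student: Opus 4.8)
The plan is to interpret every degree-$3$ symmetry as an element of the kernel of the evaluation map sending formal ternary expressions into the free Poisson algebra, and then to pin that kernel down by a dimension count. First I would fix distinct variables $a,b,c$ and let $V$ be the $18$-dimensional space spanned by the formal symbols $\langle a^\sigma, b^\sigma, c^\sigma \rangle$, $(a^\sigma, b^\sigma, c^\sigma)$ and $[a^\sigma, b^\sigma, c^\sigma]$ for $\sigma \in S_3$ (six symbols per operation, each family carrying the regular representation of $S_3$). Substituting the definitions from Definition \ref{ternarydefinition} gives a linear map $E \colon V \to P$, where $P = \mathrm{Poisson}(a,b,c)$ is the free Poisson algebra; the image lies in the multilinear component $P_{(1,1,1)}$ spanned by monomials that use each of $a,b,c$ exactly once. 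By definition a symmetry in degree $3$ is precisely an element of $\ker E$, so the lemma asserts that the four listed relations span $\ker E$.

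Next I would compute $\dim P_{(1,1,1)}$ via the Poincar\'e--Birkhoff--Witt description of the free Poisson algebra: as a graded vector space $P \cong S(L)$, where $L = \mathrm{Lie}(a,b,c)$ is the free Lie algebra and $S$ is the symmetric algebra. Splitting the multilinear part by the factorization type into Lie factors yields three summands. The $S^1$ piece is the multilinear part of $L$ in degree $3$, of dimension $(3-1)! = 2$; the $S^2$ piece is spanned by $[x,y] \cdot z$, of dimension $3$ (three choices of singleton, with the multilinear part of $L$ in degree $2$ being one-dimensional); and the $S^3$ piece is spanned by $a \cdot b \cdot c$, of dimension $1$. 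Hence $\dim P_{(1,1,1)} = 2 + 3 + 1 = 6$. Crucially, the three operations land in three distinct summands: $[\cdot]$ in the $S^1$ piece, $(\cdot)$ in the $S^2$ piece, and $\langle \cdot \rangle$ in the $S^3$ piece. Since these summands are independent in $P$, there are no symmetries mixing different operations, $E$ is surjective, and $\dim \ker E = 18 - 6 = 12$.

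Finally I would verify that the listed relations already exhaust those $12$ dimensions. The relation $\langle a,b,c \rangle = \langle a^\sigma, b^\sigma, c^\sigma \rangle$ forces all six $\langle \cdot \rangle$-symbols to agree, which is $5$ independent constraints; the $S_3$-orbit of $(a,b,c)+(b,a,c)=0$ gives $3$ constraints (one for each choice of the third slot); and the $S_3$-orbits of $[a,b,c]+[b,a,c]=0$ and $[a,b,c]+[b,c,a]+[c,a,b]=0$ contribute $3$ and $1$ further independent constraints respectively, the transposed Jacobi relation being redundant modulo the antisymmetry already imposed. These total $5 + 3 + 4 = 12$ independent symmetries, and all of them plainly hold in $P$, being consequences of commutativity of $\cdot$, anticommutativity of $[\,,\,]$, and the Jacobi identity. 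Being $12$ independent elements of a $12$-dimensional kernel, they span $\ker E$, which proves the lemma.

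I expect the main obstacle to be the Poincar\'e--Birkhoff--Witt count, specifically justifying the decomposition of $P_{(1,1,1)}$ into its three summands and confirming that each operation occupies its own summand, so that $E$ is surjective and no cross-operation symmetries arise; once surjectivity is secured, the remainder is bookkeeping over $S_3$-orbits. An alternative, in the computational spirit of Section \ref{computational}, would be to evaluate the $18$ symbols against an explicit basis of $P_{(1,1,1)}$ (for instance using the polynomial realization of Example \ref{Ex1} together with a spanning set of degree-$3$ monomials, or a direct PBW basis) and to compute the rank of the resulting $18 \times 6$ matrix by linear algebra; this would simultaneously confirm $\dim \ker E = 12$ and exhibit the listed relations as a spanning set of the null space.
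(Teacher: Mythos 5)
Your proposal is correct, and it is considerably more detailed than the paper's own proof, which consists of the single sentence ``This follows immediately from Definition \ref{ternarydefinition}.'' The paper is implicitly treating both halves of the claim as obvious: that the listed relations hold (which really is immediate from commutativity of $\cdot$, anticommutativity of $[\,,\,]$, and Jacobi), and that they generate \emph{all} degree-$3$ symmetries. Your argument supplies the second half explicitly: you identify symmetries with the kernel of the evaluation map $E\colon V \to P_{(1,1,1)}$, use the decomposition $P \cong S(L)$ (which the paper itself invokes later, in Lemma \ref{freePoissonlemma} and Table \ref{Poissondegree5}) to get $\dim P_{(1,1,1)} = 1 + 3 + 2 = 6$, observe that the three operations land in the three distinct summands graded by the number of Lie factors so that no cross-operation symmetries exist and $E$ is surjective, and then match $\dim\ker E = 12$ against the $5+3+4$ independent relations in the $S_3$-orbits of the listed identities. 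All of these counts are right, including the observation that the transposed Jacobi relation is redundant modulo antisymmetry. What your approach buys is an actual proof of completeness rather than an appeal to the reader's intuition; what the paper's one-liner buys is brevity, at the cost of leaving the surjectivity and dimension bookkeeping implicit (though the same counts reappear in the paper's Tables \ref{ternarydegree5} and \ref{Poissondegree5}, where the monomial counts $10+30+20+\cdots$ per association type depend on exactly the symmetries you verified).
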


\begin{proof}
This follows immediately from Definition \ref{ternarydefinition}.
\end{proof}

\begin{definition}
\label{PTSdefinition}
A \emph{Poisson triple system} (PTS) is a vector space $V$ with
three trilinear maps $f, g, h \colon V^3 \to V$ written using the notation
\[
f(a,b,c) = \langle a,b,c \rangle, \qquad
g(a,b,c) = (a,b,c), \qquad
h(a,b,c) = [a,b,c],
\]
which satisfy the symmetries of Lemma \ref{degree3symmetries} and the following relations:
\begin{alignat*}{2}
& 1) & \quad
%
\langle \langle a,b,c \rangle, d,e \rangle
&=
\langle a,b, \langle c,d,e \rangle \rangle
\\
& 2) & \quad
%
  \langle (a,b,c),d,e \rangle
  &=
  (a,b, \langle c,d,e \rangle)
\\
& 3) & \quad
%
  (a,b,(c,d,e))
  &=
  (c,d,(a,b,e))
\\
& 4) & \quad
%
  \langle [a,b,c],d,e \rangle
&=
  ((a,b,e),c,d)
+ (a,b,(c,e,d))
\\
& 5) & \quad
%
  (\langle a,b,c \rangle, d,e)
  &=
  (a,d, \langle b,c,e \rangle)
+ (b,d, \langle a,c,e \rangle)
+ (c,d, \langle a,b,e \rangle)
\\
& 6) & \quad
%
  [(a,b,c),d,e]
&=
  ([a,b,d],e,c)
- (a,b,[d,c,e])
- (d,c,[a,b,e])
- (e,c,[a,b,d])
\\
& 7) & \quad
%
  ([a,b,c],d,e)
&=
  ([a,d,b],c,e)
- ([b,d,a],c,e)
- ([c,d,a],b,e)
+ ([c,d,b],a,e)
\\
& 8) & \quad
%
[a,b,[c,d,e]]
&=
[[a,b,c],d,e]
+
[c,[a,b,d],e]
+
[c,d,[a,b,e]]
\\
& 9) & \quad
%
[ \langle a,b,c \rangle, d,e ]
&=
  ((a,d,b),e,c)
- (a,d,(e,c,b))
+ ((b,d,a),e,c)
\\
&&&\qquad {}
- (b,d,(e,c,a))
+ ((c,d,b),e,a)
- (c,d,(e,a,b))
\end{alignat*}
\end{definition}

\begin{remark}
Section \ref{computational} shows how we found the relations in Definition \ref{PTSdefinition}.
\end{remark}

\begin{theorem}
\label{relationssatisfied}
The relations in Definition \ref{PTSdefinition} are satisfied by the ternary operations
of Definition \ref{ternarydefinition} in every Poisson algebra.
\end{theorem}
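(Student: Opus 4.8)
The plan is to verify each of the nine relations by direct substitution. Each ternary operation is, by Definition \ref{ternarydefinition}, a composite of the two binary operations of the underlying Poisson algebra, so every relation in Definition \ref{PTSdefinition} unfolds into an identity purely in terms of $a \cdot b$ and $[a,b]$ on five generators. The strategy is therefore to expand both sides of each relation using the defining notation, and then reduce both expansions to a common normal form using only the five Poisson axioms (commutativity, anticommutativity, associativity of $\cdot$, the Jacobi identity, and the Leibniz/derivation rule). Because both sides live in the multilinear component of degree $5$ on distinct generators $a,b,c,d,e$, it suffices to check that the two sides agree coefficient-by-coefficient once everything is rewritten in a fixed normal form.

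The key computational device is a canonical rewriting system for Poisson words. First I would use associativity and commutativity of $\cdot$ to normalize every purely associative monomial into a sorted product of its factors. Next, the derivation rule (axiom 5) lets me push every bracket ``outward'' past the commutative product, rewriting any expression of the form $[a, b \cdot c]$ as $[a,b]\cdot c + b \cdot [a,c]$ and, via anticommutativity, handling $[b\cdot c, a]$ as well; iterating this moves all occurrences of the bracket so that brackets are applied only to inputs that are themselves either generators or iterated brackets. Finally, the Jacobi identity normalizes nested Lie brackets. Applying this procedure to both sides of each of the nine relations should produce identical multilinear expressions, establishing the relation.

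I would organize the verification by the structure of the relations. Relations $1$--$3$ and $5$ involve only the operations $\langle\,\cdot\,\rangle$ and $(\,\cdot\,)$, so they reduce to identities built from associativity, commutativity, and a single application of the derivation rule; these are the most routine. Relation $8$ is exactly the statement that the inner derivation $[[a,b],-]$ acts as a derivation of the Lie bracket, which is a restatement of the Jacobi identity, so it follows directly from axiom $4$. The remaining relations $4$, $6$, $7$, and $9$ mix all three operations and require combining the Leibniz rule with the Jacobi identity and with the fact that $[a\cdot b, c]$ expands via the remark preceding Lemma \ref{degree3symmetries}; these will generate the longest expansions.

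The main obstacle will be relation $9$, together with $4$ and $6$: on the left these involve a bracket applied to a product, $[\langle a,b,c\rangle, d, e] = [[a\cdot b\cdot c, d], e]$, whose inner bracket expands by the derivation rule into a three-term sum, each term of which must then be bracketed again with $e$ and expanded a second time, producing many terms that only collapse to the right-hand side after repeated use of anticommutativity and the Jacobi identity. Keeping the bookkeeping correct across this double expansion — and matching the precise signs and argument orders recorded on the right-hand sides of \ref{PTSdefinition} — is where errors are most likely. For this reason I would expect the verification to be carried out, or at least confirmed, by computer algebra: one expands each relation into the free Poisson algebra on five generators, reduces to the normal form described above, and checks that the difference of the two sides is identically zero.
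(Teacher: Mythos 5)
Your proposal matches the paper's proof in both strategy and detail: the paper likewise observes that relation 1 is associativity of the commutative triple product and relation 8 is the derivation property following from the Jacobi identity, and verifies the remaining relations by straightforward expansion into the binary Poisson operations, carrying out relation 9 (which you correctly single out as the most involved) explicitly via the double application of the derivation rule. Your approach is essentially identical and correct.
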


\begin{proof}
Relation 1 is associativity for the commutative triple product.
Relation 8 is the derivation property of Lie triple systems.
The other relations can be proved by straightforward expansion.
For example, the left side relation 9 is
\[
\text{LS}
=
[ \langle a,b,c \rangle, d,e ]
=
[ \, [ a \cdot b \cdot c, d ], e ]
\]
Since $[-,d]$ is a derivation, and using commutativity, we can rewrite this as
\[
\text{LS}
=
[ \, [ a, d ] \cdot b \cdot c, e ]
+
[ \, [ b, d ] \cdot a \cdot c, e ]
+
[ \, [ c, d ] \cdot a \cdot b, e ]
\]
Since $[-,e]$ is a derivation, and again using commutativity, we obtain
\begin{align*}
\text{LS}
&=
[ [ a, d ], e ] \cdot b \cdot c
+
[ a, d ] \cdot [ b, e ] \cdot c
+
[ a, d ] \cdot [ c, e ] \cdot b
\\
&\quad {}
+
[ [ b, d ], e ] \cdot a \cdot c
+
[ a, e ] \cdot [ b, d ] \cdot c
+
[ b, d ] \cdot [ c, e ] \cdot a
\\
&\quad {}
+
[ [ c, d ], e ] \cdot a \cdot b
+
[ a, e ] \cdot [ c, d ] \cdot b
+
[ b, e ] \cdot [ c, d ] \cdot a
\end{align*}
The right side of relation 9 is
\begin{align*}
\text{RS}
&=
  [ [a,d] \cdot b, e ] \cdot c
+ [a,d] \cdot [c,e] \cdot b
+ [ [b,d] \cdot a, e ] \cdot c
\\
&\quad {}
+ [b,d] \cdot [c,e] \cdot a
+ [ [c,d] \cdot b, e ] \cdot a
+ [a,e] \cdot [c,d] \cdot b
\end{align*}
Expanding terms 1, 3, 5 gives
\begin{align*}
[ [a,d] \cdot b, e ] \cdot c &= [[a,d],e] \cdot b \cdot c + [a,d] \cdot [b,e] \cdot c,
\\
[ [b,d] \cdot a, e ] \cdot c &= [[b,d],e] \cdot a \cdot c + [a,e] \cdot [b,d] \cdot c,
\\
[ [c,d] \cdot b, e ] \cdot a &= [[c,d],e] \cdot a \cdot b + [b,e] \cdot [c,d] \cdot a.
\end{align*}
From this we see that the right and left sides are equal.
\end{proof}

\begin{theorem}
Every relation in degree 5 satisfied by the ternary operations
in Definition \ref{ternarydefinition} is a consequence of the relations
in Definition \ref{PTSdefinition}.
\end{theorem}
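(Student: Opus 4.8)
The plan is to reduce the statement to a finite-dimensional linear algebra problem, organized by the representation theory of the symmetric group $S_5$, and to verify the resulting dimension equality by computer algebra. Since we work in characteristic $0$, every polynomial identity is equivalent to its full multilinearization, so it suffices to treat \emph{multilinear} identities of degree $5$ in five distinct variables $a,b,c,d,e$.

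First I would set up the ambient space. A multilinear monomial of degree $5$ built from the three ternary operations of Definition \ref{ternarydefinition} is a composite of two ternary operations: an outer operation receives three arguments, one of which is a nested ternary expression in three of the variables while the other two are single variables. There are $3$ choices for the outer operation, $3$ choices for the outer slot occupied by the nested expression, and $3$ choices for the inner operation, giving $27$ association types; assigning the five variables to the five leaves in each type gives $27 \cdot 5! = 3240$ monomials. Imposing the degree-$3$ symmetries of Lemma \ref{degree3symmetries} on both the inner and outer operations collapses this to a spanning set, and reducing to a basis yields a finite-dimensional $S_5$-module $T$ on which $S_5$ acts by permuting the variables.

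Next I would construct the \emph{expansion map} $E \colon T \to P$, where $P$ is the multilinear component of degree $5$ of the free Poisson algebra on $a,b,c,d,e$. Because the Poisson operad satisfies a Poincar\'{e}--Birkhoff--Witt theorem, $\dim P = 5! = 120$, and a convenient normal form for $P$ is available (commutative products of Lie monomials). The map $E$ substitutes the definitions $\langle a,b,c\rangle = a\cdot b\cdot c$, $(a,b,c)=[a,b]\cdot c$, $[a,b,c]=[[a,b],c]$ into each basis element of $T$ and rewrites the result in normal form; its kernel $\ker E$ is, by definition, the space of all multilinear degree-$5$ identities satisfied by the ternary operations. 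Let $R \subseteq T$ be the $S_5$-submodule generated by the nine relations of Definition \ref{PTSdefinition}. Since these relations are already multilinear of degree $5$ in the five variables, there is no room to substitute or to apply further operations within degree $5$, so $R$ is simply the span of the $S_5$-orbits of the nine relations. Theorem \ref{relationssatisfied} gives $E(R)=0$, that is, $R \subseteq \ker E$, and it remains only to prove the reverse inclusion.

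The inclusion $\ker E \subseteq R$ follows from the dimension equality $\dim \ker E = \dim R$, which I would verify by decomposing every module into isotypic components indexed by the partitions $\lambda \vdash 5$. Using the Wedderburn decomposition of $\mathbb{F}[S_5]$ one replaces each $S_5$-module by the multiplicities of its irreducible constituents and works one partition at a time, so that all matrices have the small dimensions of the individual irreducible representations rather than the full sizes $3240$ and $120$. For each $\lambda$ one computes the rank of the representation matrix of $E$, hence the multiplicity of $\lambda$ in $\ker E$, together with the multiplicity of $\lambda$ in $R$, and checks that the two agree. I expect the main obstacle to be the size and reliability of these rank computations: the matrices are large enough that the arithmetic must be carried out modulo a suitably chosen prime $p$, and one must guard against an ``unlucky'' prime that would artificially lower a rank. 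This is handled by repeating the computation for more than one prime and confirming that all ranks, and hence the multiplicities in each isotypic component, are stable, which certifies the characteristic-$0$ equality $\ker E = R$.
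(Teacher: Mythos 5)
Your proposal is correct and follows the same overall strategy as the paper: reduce to multilinear identities in degree $5$, build the space $T$ of ternary monomials modulo the degree-$3$ symmetries (your $27\cdot 5!$ monomials collapse to the paper's $360$, spread over $12$ rather than $27$ association types once the symmetries are used to normalize the position of the inner operation), map it by the expansion map into the $120$-dimensional multilinear component of the free Poisson algebra, and show that the $S_5$-module $R$ generated by the nine relations coincides with $\ker E$. Where you differ is purely in the linear algebra: the paper computes the full nullspace of the $120\times 360$ expansion matrix over the integers via Hermite normal form with LLL reduction (finding rank $120$, nullity $240$), and then certifies generation greedily, checking that the submodule generated by the first $i$ nullspace vectors stabilizes after $9$ well-chosen generators --- indeed the nine relations of Definition \ref{PTSdefinition} are \emph{extracted} from this nullspace basis, so generation holds by construction. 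You instead take the nine relations as given, use Theorem \ref{relationssatisfied} for $R\subseteq\ker E$, and verify $\dim R=\dim\ker E$ isotypic component by isotypic component with modular rank computations. Your route buys smaller matrices and a cleaner logical separation (inclusion plus dimension count); the paper's route buys exact integer arithmetic and simultaneously produces the generating set. One small remark in your favour: because $\operatorname{rank}_p\le\operatorname{rank}_{\mathbb{Q}}$ for both matrices involved, a successful check $\operatorname{rank}_p(R)+\operatorname{rank}_p(E)=\dim T$ at a single prime already certifies the characteristic-$0$ equality (an unlucky prime can only cause a false negative), so your multi-prime safeguard, while good practice, is not logically needed.
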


\begin{proof}
By computer algebra as explained in Section \ref{computational}.
\end{proof}

\begin{example}
The Poisson algebra of Example \ref{Ex0} gives rise to the following PTS:
\[
\langle a, b, c \rangle = 0 = ( a, b, c ) , \qquad 
[ a, b, c ] = [ [ a, b ], c ].
\]
\end{example}

\begin{example}
The Poisson algebra of Example \ref{Ex1} gives rise to the following PTS:
 \[\langle f,g,h \rangle = fgh,\]
\[  (f, g, h) = \frac{\partial f} {\partial X}  \frac{\partial g} {\partial Y} h  -    \frac{\partial f } {\partial Y}  \frac{\partial g} {\partial X}h,\]
 \[ [f,g,h]=  \frac{\partial}{\partial X}\left(  \frac{\partial f}{\partial X}\frac{\partial g}{\partial Y}
 - \frac{\partial f}{\partial Y}  \frac{\partial g}{\partial X}\right) \frac{\partial h}{\partial Y}
 -  \frac{\partial}{\partial Y}\left(  \frac{\partial f}{\partial X}\frac{\partial g}{\partial Y}
 - \frac{\partial f}{\partial Y}  \frac{\partial g}{\partial X}\right) \frac{\partial h}{\partial X}. \]
\end{example}


\section{Computational methods}
\label{computational}

First, we consider free algebras with three abstract ternary operations denoted  
$\langle -, -, - \rangle$, $( -, -, - )$, $[ -, -, - ]$
satisfying the symmetries of Lemma \ref{degree3symmetries}.

\begin{definition}
An \emph{association type} in degree $n$ is a valid placement of ternary operation symbols
into the sequence $a_1 \cdots a_n$.
A \emph{multilinear monomial} is obtained by permuting the subscripts of the arguments.
\end{definition}

\begin{lemma}
In degree 5, it suffices to consider the association types in Table \ref{ternarydegree5}.
If we apply a permutation $\sigma$ to the arguments $a, b, c, d, e$
then each association type has the indicated symmetries 
where $\prec$ denotes lexicographical order.
It follows that each association type has the indicated number 
of distinct multilinear monomials.
\end{lemma}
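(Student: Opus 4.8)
The plan is to carry out an exhaustive but elementary enumeration, organized by the shape of the rooted ternary tree underlying each association type. In degree 5 a monomial is built from exactly two ternary operations: an outer operation whose three arguments, read left to right, are one compound subtree (itself a single ternary operation on three leaves) together with two single leaves, the compound occupying slot 1, 2, or 3. Reading the leaves in the standard order $a,b,c,d,e$, a raw association type is thus specified by a choice of outer operation, a choice of inner operation, and the slot of the compound, giving $3\times 3\times 3 = 27$ possibilities. First I would list these 27 and then collapse them using only the degree-3 symmetries of Lemma \ref{degree3symmetries}.

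The collapse step uses the symmetry of each operation at the level of the outer slots. When the outer operation is $\langle-,-,-\rangle$ it is totally symmetric, so the three slot positions coincide and a single canonical type (compound in slot 1) survives. When the outer operation is $(-,-,-)$ the antisymmetry in the first two arguments identifies slots 1 and 2 up to sign, while slot 3 is genuinely different, so two position classes survive. When the outer operation is $[-,-,-]$ the antisymmetry identifies slots 1 and 2, and the cyclic (Jacobi) relation lets one rewrite the slot-3 placement in terms of slot-1 placements, so again only one position class survives. Running over the three inner operations this yields $3+6+3=12$ canonical association types, which I expect to be exactly the contents of Table \ref{ternarydegree5}.

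For each surviving type I would read off its symmetry group $G\le S_5$ by composing two independent pieces: the inner operation acting on its three leaves, and the residual outer symmetry on the remaining two leaves once the compound is pinned to its canonical slot. Concretely, the inner symmetry is $S_3$ (all signs $+$) for $\langle-,-,-\rangle$, a single signed transposition for $(-,-,-)$, and for $[-,-,-]$ the antisymmetry together with the Jacobi relation; the residual outer symmetry is a transposition of the two outer leaves when the outer operation is symmetric or when the compound sits in slot 3 of $(-,-,-)$, and is trivial otherwise. Writing each symmetry as an equation between monomials and using $\prec$ to select the lexicographically least representative gives the ``indicated symmetries'' of the statement. The monomial count then follows from the clean observation that, because a multilinear monomial uses five \emph{distinct} labels, no nontrivial element of $G$ can fix a labeling; hence $G$ acts with all stabilizers trivial on the $5!$ labelings and the number of orbits is exactly $120/|G|$. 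Equivalently I would compute each count as (choice of the three inner labels) $\times$ (number of inner monomials) $\times$ (number of outer arrangements), where the inner factor is $1$, $3$, $2$ for $\langle-,-,-\rangle$, $(-,-,-)$, $[-,-,-]$ respectively, and similarly for the outer arrangements; this reproduces the counts $10,30,20$ (outer $\langle-,-,-\rangle$), then $20,60,40$ and $10,30,20$ (outer $(-,-,-)$, slots 1 and 3), and $20,60,40$ (outer $[-,-,-]$).

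The hard part will be the bookkeeping for the operation $[-,-,-]$, because its defining data combine a two-term antisymmetry with the genuinely three-term Jacobi relation. As an inner operation the Jacobi relation means the naive orbit count overcounts, and I must argue that the Jacobi relations attached to the distinct choices of inner label-triple are linearly independent, so that each subtracts exactly one dimension (equivalently, that the Lie-triple span on three distinct arguments is two-dimensional). As an outer operation the same relation is what collapses the slot-3 placement into slot 1, and I must check that this rewriting is consistent and does not secretly force any monomial to equal its own negative. Once the independence and consistency of these $[-,-,-]$ relations is established, the remaining cases involve only the free orbit count above, and assembling the twelve lines completes the proof.
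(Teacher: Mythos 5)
Your proposal is correct and follows essentially the same route as the paper, whose proof is only the one-line observation that everything follows from the symmetries of Lemma \ref{degree3symmetries}, illustrated by the rewriting $[b,c,a]=-[a,b,c]+[a,c,b]$ that pins the lexicographically first argument of $[-,-,-]$ to the first slot. Your systematic enumeration of the $27$ raw types, the collapse to $12$, and the orbit/dimension counts (including the correctly identified need to check that each Jacobi relation cuts exactly one dimension, i.e.\ that the multilinear Lie span on three distinct arguments is two-dimensional) is simply the detailed version of that argument, and all twelve counts match Table \ref{ternarydegree5}.
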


\begin{proof}
The claims follow immediately from Lemma \ref{degree3symmetries}.
For example,
\[
[c,a,b] = - [a,c,b],
\qquad
[b,c,a] = - [a,b,c] - [c,a,b] =  - [a,b,c] + [a,c,b],
\]
and hence in any occurrence of $[ -, -, - ]$ the lexicographically
first argument can be moved to the first position.
\end{proof}

\begin{definition}
We write $T(5)$ for the vector space whose basis consists of the 360 multilinear monomials 
in Table \ref{ternarydegree5}.
\end{definition}

\begin{table}[ht]
  \begin{alignat*}{4}
  & & &\text{association type} & & \text{symmetries} & & \text{monomials}
  \\
  1 &\qquad & &\langle \langle a, b, c \rangle, d, e \rangle  &\qquad
  & a^\sigma \prec b^\sigma \prec c^\sigma, \; d^\sigma \prec e^\sigma &\qquad
  & 10
  \\
  2 &\qquad & &\langle (a, b, c), d, e \rangle  &\qquad
  & a^\sigma \prec b^\sigma, \; d^\sigma \prec e^\sigma &\qquad
  & 30
  \\
  3 &\qquad & &\langle [a, b, c], d, e \rangle  &\qquad
  & a^\sigma \prec b^\sigma, \; a^\sigma \prec c^\sigma, \; d^\sigma \prec e^\sigma &\qquad
  & 20
  \\
  4 &\qquad & &( \langle a, b, c \rangle, d, e ) &\qquad
  & a^\sigma \prec b^\sigma \prec c^\sigma &\qquad
  & 20
  \\
  5 &\qquad & &( a, b, \langle c, d, e \rangle ) &\qquad
  & a^\sigma \prec b^\sigma, \; c^\sigma \prec d^\sigma \prec e^\sigma &\qquad
  & 10
  \\
  6 &\qquad & &( ( a, b, c ), d, e ) &\qquad
  & a^\sigma \prec b^\sigma &\qquad
  & 60
  \\
  7 &\qquad & &( a, b, ( c, d, e ) ) &\qquad
  & a^\sigma \prec b^\sigma, \; c^\sigma \prec d^\sigma &\qquad
  & 30
  \\
  8 &\qquad & &( [ a, b, c ], d, e ) &\qquad
  & a^\sigma \prec b^\sigma, \; a^\sigma \prec c^\sigma &\qquad
  & 40
  \\
  9 &\qquad & &( a, b, [ c, d, e ] ) &\qquad
  & a^\sigma \prec b^\sigma, \; c^\sigma \prec d^\sigma, \; c^\sigma \prec e^\sigma &\qquad
  & 20
  \\
  10 &\qquad & &[\langle a, b, c \rangle, d, e ] &\qquad
  & a^\sigma \prec b^\sigma \prec c^\sigma &\qquad
  & 20
  \\
  11 &\qquad & &[ ( a, b, c), d, e ] &\qquad
  & a^\sigma \prec b^\sigma &\qquad
  & 60
  \\
  12 &\qquad & &[ [ a, b, c], d, e ] &\qquad
  & a^\sigma \prec b^\sigma, \; a^\sigma \prec c^\sigma &\qquad
  & 40
  \end{alignat*}
\caption{Association types, symmetries, and numbers of 
multilinear monomials in degree 5 for the three ternary operations}
\label{ternarydegree5}
\end{table}

Second, we consider a monomial basis for the multilinear subspaces 
of free Poisson algebras (which can be identified with
symmetric algebras of free Lie algebras).

\begin{definition}
If $X = \{ a_1, \dots, a_n \}$ is a set of $n$ generators, 
then $P(X)$ denotes the free Poisson algebra generated by $X$,
$L(X)$ the free Lie algebra on $X$, and
$S(X)$ the free symmetric (commutative associative) algebra on $X$.
We write $P(n)$ for the multilinear subspace of degree $n \ge 1$ 
in the free Poisson algebra on $n$ generators,
and similarly for the other cases.
\end{definition}

\begin{lemma}
\label{freePoissonlemma}
As $\mathbb{Z}$-graded vector spaces we have $P(X) \cong S(L(X))$.
(Strictly speaking, we should write $S(Y)$ where $Y$ is a basis of $L(X)$.)
\end{lemma}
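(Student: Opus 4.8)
The plan is to prove the Poincar\'e--Birkhoff--Witt-type statement by realizing $S(L(X))$ as the free Poisson algebra on $X$ and then invoking the uniqueness of free objects. First I would equip the symmetric algebra $S(L(X))$ with a Poisson bracket. On the generating subspace $L(X) \subset S(L(X))$ the bracket is already given (the Lie bracket), and I would extend it to all of $S(L(X))$ by requiring it to be a biderivation of the commutative product: for Lie elements $u_1, \dots, u_m$ and $v_1, \dots, v_n$,
\[
[ u_1 \cdots u_m, \, v_1 \cdots v_n ]
=
\sum_{i,j} [u_i, v_j] \cdot u_1 \cdots \widehat{u_i} \cdots u_m \cdot v_1 \cdots \widehat{v_j} \cdots v_n .
\]
The first task is to check that this is well defined on $S(L(X))$ (independent of how an element is written as a product) and that it satisfies the Poisson axioms of Definition \ref{classical}: anticommutativity and the Leibniz rule are built into the construction, so the substantive point is the Jacobi identity, which I would reduce, via the biderivation property, to the Jacobi identity already holding in $L(X)$.

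Second, I would verify that $S(L(X))$, so equipped, satisfies the universal property of the free Poisson algebra on $X$. Given any Poisson algebra $A$ and any set map $\phi \colon X \to A$, freeness of the Lie algebra $L(X)$ extends $\phi$ uniquely to a Lie homomorphism $\bar\phi \colon L(X) \to A$ (using the bracket of $A$). Since $S(-)$ is left adjoint to the forgetful functor from commutative associative algebras to vector spaces, $\bar\phi$ extends uniquely to a homomorphism of commutative associative algebras $\Phi \colon S(L(X)) \to A$. The remaining verification, which I expect to be the main obstacle, is that $\Phi$ also preserves the Poisson bracket: because both $[\Phi(-), \Phi(-)]$ and $\Phi([-,-])$ are biderivations of the commutative product that agree on the generating subspace $L(X)$ (where $\Phi$ restricts to the Lie homomorphism $\bar\phi$), they must agree on all of $S(L(X))$, and making this comparison precise is the only delicate step.

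Finally, since both $P(X)$ and $S(L(X))$ are free Poisson algebras on $X$, the universal property yields mutually inverse Poisson homomorphisms between them, hence an isomorphism $P(X) \cong S(L(X))$ of Poisson algebras. Placing $X$ in degree $1$ on both sides, all structure maps are homogeneous (the bracket on $S(L(X))$ preserves total degree, since $[u_i,v_j]$ lies in degree $\deg u_i + \deg v_j$), so this isomorphism is graded, which is the assertion of the lemma. The parenthetical caveat, that one should really write $S(Y)$ for a basis $Y$ of $L(X)$, is simply the remark that $S(-)$ is applied to a vector space, whose symmetric algebra is the polynomial algebra on any chosen basis.
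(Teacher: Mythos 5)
Your argument is correct, but it is worth noting that the paper does not actually prove this lemma: it simply cites the literature (Shestakov's Lemma~1 and the paper of Mishchenko, Petrogradsky and Regev), where essentially the argument you give is carried out. So what you have written is a self-contained reconstruction of the standard proof rather than a different method. The two nontrivial points you identify are indeed the right ones, and both go through: well-definedness of the bracket follows because your formula, viewed on $L(X)^{\otimes m}\otimes L(X)^{\otimes n}$, is multilinear and separately symmetric in the $u_i$ and the $v_j$, hence descends to $S^m(L(X))\otimes S^n(L(X))$; and the Jacobi identity reduces to generators because the Jacobiator of a bracket that is a biderivation in each argument is a derivation in each of its three arguments, so its vanishing on $L(X)$ (where it is the Lie Jacobiator) forces its vanishing everywhere. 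The same derivation argument gives bracket preservation by $\Phi$, as you say, and uniqueness of $\Phi$ is immediate since $X$ generates $S(L(X))$ as a Poisson algebra. Two small points to tidy up if you write this in full: the free Poisson algebra in this paper is non-unital (it is later identified with the non-unital tensor algebra), so you should use the augmentation ideal $S^{+}(L(X))$ rather than the full unital symmetric algebra; and the grading claim needs the observation that $L(X)$ is itself graded by word length with $X$ in degree~$1$, so that the induced grading on $S(L(X))$ by total degree matches the generator grading on $P(X)$. With those remarks your proof is complete; the paper's citation buys brevity, while your version makes the PBW-type mechanism explicit.
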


\begin{proof}
See \cite{Mishchenko} and \cite[Lemma 1]{Shestakov}.
\end{proof}
 
\begin{lemma}
We have $\dim L(n) = (n{-}1)!$ and a basis consists of all left-normed Lie monomials
with the lexicographically first argument in the first position:
\[
[ \cdots [ [ a_{\sigma(1)}, a_{\sigma(2)} ], a_{\sigma(3)} ], \dots, a_{\sigma(n)} ],  
\quad
\sigma \in S_n, \quad \sigma(1)=1.
\]
\end{lemma}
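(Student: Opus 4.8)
The plan is to prove both assertions at once by exhibiting the proposed monomials as a linearly independent spanning set, so that simply counting them yields the dimension. Write $w_\sigma$ for the left-normed monomial $[\cdots[[a_1,a_{\sigma(2)}],a_{\sigma(3)}],\dots,a_{\sigma(n)}]$ attached to a permutation $\sigma \in S_n$ with $\sigma(1)=1$; there are exactly $(n{-}1)!$ of these, indexed by the orderings of $a_2,\dots,a_n$ in the last $n{-}1$ positions. First I would establish spanning. The multilinear space $L(n)$ is spanned by all multilinear Lie monomials, and a standard induction on bracket depth using the Jacobi identity rewrites any such monomial as a linear combination of left-normed ones. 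Among left-normed monomials, antisymmetry and Jacobi let one move the lexicographically first generator $a_1$ into the leftmost slot, exactly as in the sample reductions $[c,a,b]=-[a,c,b]$ and $[b,c,a]=-[a,b,c]+[a,c,b]$ recorded earlier; iterating, every left-normed monomial becomes a combination of the $w_\sigma$, so $\{w_\sigma : \sigma(1)=1\}$ spans $L(n)$.

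Next I would prove linear independence inside the free associative (tensor) algebra $A(X)$ on $X$, into which $L(X)$ embeds as the Lie subalgebra generated by $X$ (equivalently $A(X)=U(L(X))$, so the inclusion is injective by the PBW theorem). Expanding each bracket by $[u,v]=uv-vu$ and tracking only the associative words that begin with $a_1$, an easy induction on $n$ shows that the unique such word occurring in $w_\sigma$ is $a_1 a_{\sigma(2)} \cdots a_{\sigma(n)}$, and that it occurs with coefficient $+1$: writing $w_\sigma = w' a_{\sigma(n)} - a_{\sigma(n)} w'$, only the first summand can contribute a word starting with $a_1$, and the claim for $w'$ follows by induction. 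Since distinct admissible $\sigma$ yield distinct words $a_1 a_{\sigma(2)} \cdots a_{\sigma(n)}$, reading off the coefficient of $a_1 a_{\tau(2)} \cdots a_{\tau(n)}$ in any relation $\sum_\sigma \lambda_\sigma w_\sigma = 0$ forces $\lambda_\tau = 0$. Thus the $w_\sigma$ are linearly independent in $A(X)$, hence in $L(n)$. A linearly independent spanning set is a basis, so the $(n{-}1)!$ monomials $w_\sigma$ form a basis of $L(n)$ and $\dim L(n) = (n{-}1)!$.

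The main obstacle is the spanning step: the reduction to left-normed monomials and then to $a_1$-leading monomials must be organized as a genuine double induction (on bracket depth, then on the position of $a_1$) rather than waved through, since the Jacobi rewriting can temporarily increase the number of terms. By contrast the leading-word argument for independence is essentially mechanical once the embedding $L(X) \hookrightarrow A(X)$ is invoked. As a sanity check, the value $(n{-}1)! = \dim \mathrm{Lie}(n)$ agrees with the known dimension of the arity-$n$ component of the Lie operad, and one may alternatively derive the dimension from the PBW isomorphism $A(X) \cong S(L(X))$, whose multilinear part has dimension $n!$, via the exponential formula $\exp(\sum_{k\ge1} \dim L(k)\, x^k/k!) = \sum_{n\ge0} x^n = 1/(1-x)$.
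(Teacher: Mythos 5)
Your argument is correct, but note that the paper does not actually prove this lemma: its ``proof'' is a single citation to Reutenauer \cite[\S 5.6.2]{Reutenauer}, so there is no argument in the text to compare yours against. What you have written is essentially the classical self-contained proof that the citation points to: spanning by a double induction (Jacobi to reduce to left-normed monomials, then antisymmetry and Jacobi to drive $a_1$ into the leftmost slot, exactly as in the degree-$3$ reductions the paper records in its own Lemma on association types), and linear independence by embedding $L(X)$ into the tensor algebra via PBW and observing that $w_\sigma$ contains the associative word $a_1 a_{\sigma(2)}\cdots a_{\sigma(n)}$ as its unique word beginning with $a_1$, with coefficient $+1$ --- the inductive step $w_\sigma = w'a_{\sigma(n)} - a_{\sigma(n)}w'$ is verified correctly, since every word of the second summand begins with $a_{\sigma(n)}\neq a_1$. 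Your closing generating-function check via $\exp\bigl(\sum_k (k-1)!\,x^k/k!\bigr) = 1/(1-x)$ is also a valid independent confirmation of the dimension count. The only thing the citation buys over your argument is brevity; what your argument buys is that the reader sees why the basis works, and in particular why the normalization $\sigma(1)=1$ is forced, which is the same normalization the paper reuses when listing the symmetries of the bracket association types.
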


\begin{proof}
Reutenauer \cite[\S5.6.2]{Reutenauer}.
\end{proof}

\begin{corollary}
Table \ref{Poissondegree5} gives a basis for $P(5)$.
\end{corollary}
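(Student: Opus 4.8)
The plan is to read the corollary off the two preceding lemmas. By Lemma \ref{freePoissonlemma} we have an isomorphism of graded vector spaces $P(X) \cong S(L(X))$, so it is enough to produce a basis for the multilinear degree-$5$ component of $S(L(X))$ in the generators $a,b,c,d,e$ and to check that it coincides with the list in Table \ref{Poissondegree5}. First I would invoke the monomial basis of a free symmetric algebra: a multilinear element of $S(L(X))$ of degree $5$ is an unordered product $w_1 \cdots w_m$ of Lie monomials $w_i \in L(X)$ whose supports are pairwise disjoint and together cover $\{a,b,c,d,e\}$. Hence such monomials are indexed by a set partition $\pi$ of $\{a,b,c,d,e\}$ together with a choice, on each block $B \in \pi$, of a multilinear Lie monomial in the generators of $B$.

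On a block $B$ of size $k$ I would use the explicit basis of $L(k)$ from the previous lemma: the $(k-1)!$ left-normed brackets on the generators of $B$ with the lexicographically smallest generator of $B$ placed first. To eliminate the redundancy produced by commutativity of the outer product, I would impose a canonical ordering on the factors $w_1, \dots, w_m$, for instance by increasing smallest generator; this realizes $w_1 \cdots w_m$ as a strictly increasing word in the ordered monomial (PBW) basis of $S(L(X))$, so the resulting monomials are linearly independent and span. Combining the Lie bases on the blocks with this ordering gives a basis of the multilinear degree-$5$ component, and after the isomorphism a basis of $P(5)$.

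It remains to enumerate these monomials and match them against Table \ref{Poissondegree5}. Each partition type contributes $(\text{number of set partitions of that type}) \times \prod_{B}(|B|-1)!$ monomials, so the seven types $5$, $4{+}1$, $3{+}2$, $3{+}1{+}1$, $2{+}2{+}1$, $2{+}1{+}1{+}1$, $1{+}1{+}1{+}1{+}1$ contribute $24, 30, 20, 20, 15, 10, 1$ respectively, summing to $120 = 5!$. These are precisely the rows of the table. The only step that needs care is the canonical-ordering convention for the outer commutative product: it is genuinely necessary when $\pi$ has repeated block sizes, such as the two brackets of type $2{+}2{+}1$, where omitting it would double-count each monomial and inflate the total past $5!$.
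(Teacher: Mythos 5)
Your argument is exactly the one the paper intends: the corollary is stated without proof as an immediate consequence of Lemma \ref{freePoissonlemma} and the left-normed Lie basis lemma, and you have simply written out the details (set partitions indexing the symmetric-algebra monomials, the $(|B|-1)!$ Lie basis on each block, and the canonical ordering of equal-sized blocks that accounts for the condition $a^\sigma \prec c^\sigma$ in the $221$ row). The counts $24+30+20+20+15+10+1=120=5!$ check out, so the proposal is correct and matches the paper's approach.
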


\begin{table}[ht]
  \begin{alignat*}{5}
  & &\quad
  &\text{partition} &\quad
  &\text{association type} &\quad
  &\text{symmetries} &\quad
  &\text{monomials}
  \\
  & 1 &\quad
  & 5 & \quad
  & [[[[a,b],c],d],e] &\quad
  & a^\sigma \prec b^\sigma, c^\sigma, d^\sigma, e^\sigma &\quad
  & 24
  \\
  & 2 &\quad
  & 41 &\quad
  & [[[a,b],c],d] \cdot e &\quad
  & a^\sigma \prec b^\sigma, c^\sigma, d^\sigma &\quad
  & 30
  \\
  & 3 &\quad
  & 32 &\quad
  & [[a,b],c] \cdot [d,e] &\quad
  & a^\sigma \prec b^\sigma, c^\sigma; \; d^\sigma \prec e^\sigma &\quad
  & 20
  \\
  & 4 &\quad
  & 311 &\quad
  & [[a,b],c] \cdot d \cdot e &\quad
  & a^\sigma \prec b^\sigma, c^\sigma; \; d^\sigma \prec e^\sigma&\quad
  & 20
  \\
  & 5 &\quad
  & 221 &\quad
  & [a,b] \cdot [c,d] \cdot e &\quad
  & a^\sigma \prec b^\sigma; \; c^\sigma \prec d^\sigma; \; a^\sigma \prec c^\sigma &\quad
  & 15
  \\
  & 6 &\quad
  & 2111 &\quad
  & [a,b] \cdot c \cdot d \cdot e &\quad
  & a^\sigma \prec b^\sigma; \; c^\sigma \prec d^\sigma \prec e^\sigma &\quad
  & 10
  \\
  & 7 &\quad
  & 11111 &\quad
  & a \cdot b \cdot c \cdot d \cdot e &\quad
  & a^\sigma \prec b^\sigma \prec c^\sigma \prec d^\sigma \prec e^\sigma &\quad
  & 1
  \end{alignat*}
\caption{Association types, symmetries, and numbers
of multilinear monomials of degree 5 in the free Poisson algebra}
\label{Poissondegree5}
\end{table}

\begin{definition}
The \emph{expansion map} is a linear map $R\colon T(5) \to P(5)$ defined on basis monomials 
from Table \ref{ternarydegree5}.
To compute $R$, each ternary operation in the monomial basis of $T(5)$ is expanded 
into a binary Poisson monomial by applying Definition \ref{ternarydefinition}.
The result will be a linear combination of Poisson monomials
which are not necessarily in the form of Table \ref{Poissondegree5}.
These Poisson monomials then require further straightening using the Jacobi identity.
\end{definition}

\begin{example}
In degree 5, the only non-trivial cases are association types 4, 6, 10, 11 in Table \ref{ternarydegree5}.
For example, the expansion of type 11 is as follows:
\begin{align*}
[(a,b,c),d,e]
&=
[ \, [ \, [ a, b ] \cdot c, d ], e ]
\\
&=
[ \, [ \, [ a, b ], d ] \cdot c, e ]
+
[ \, [ a, b ] \cdot [ c, d ], e ]
\\
&=
[ \, [ \, [ a, b ], d ], e ] \cdot c
+
[ \, [ a, b ], d ] \cdot [ c, e ]
+
[ \, [ a, b ], e ] \cdot [ c, d ]
+
[ a, b ] \cdot [ \, [ c, d ], e ]
\\
&=
[ \, [ \, [ a, b ], d ], e ] \cdot c
+
[ \, [ a, b ], d ] \cdot [ c, e ]
+
[ \, [ a, b ], e ] \cdot [ c, d ]
+
[ \, [ c, d ], e ] \cdot [ a, b ]
\end{align*}
\end{example}

\begin{remark}
The kernel of $R$ consists of the relations satisfied by the ternary operations of
Definition \ref{ternarydefinition} in every Poisson algebra.
\end{remark}

\begin{definition}
The $120 \times 360$ \emph{expansion matrix} $M$ represents
$R$ with respect to the ordered monomial bases 
in Tables \ref{ternarydegree5} and \ref{Poissondegree5}:
the $(i,j)$ entry of $M$ is the coefficient of the $i$-th Poisson monomial
in the expansion of the $j$-th ternary monomial.
\end{definition}

We use computer algebra (Maple) to find a set of
$S_5$-module generators for the nullspace of $M$.
We first determine that $M$ has full rank 120 and nullity 240.
We compute the Hermite normal form $H$ of the transpose $M^t$ 
together with a square integer matrix $U$ of size 360 satisfying $U M^t = H$.
We use the Maple command \texttt{LinearAlgebra[HermiteForm]}
with the option \texttt{method='integer[reduced]'} so that the LLL algorithm
is applied to reduce the size of the entries of $U$
all of which belong to $\{ 0, \pm 1 \}$.
The bottom 240 rows of $U$ form a matrix $N$ whose rows form a basis 
for the left nullspace of $M^t$, which is the right nullspace of $M$.
The rows of $N$ are the coefficient vectors of the relations satisfied by 
the ternary operations of Definition \ref{ternarydefinition} in every Poisson algebra.

The size of a coefficient vector is the sum of the squares of its entries.
The size of a basis is the base-10 logarithm of the product of the sizes of the basis vectors.
We find that the basis of the nullspace consisting of the rows of $N$ has size $\approx 183.5$.
The shortest basis vector has size 2 and the longest has size 20.
We apply the LLL algorithm with increasing reduction parameters $3/4$, $9/10$, and $99/100$
to obtain bases of the nullspace with sizes 170.2, 165.2, and 143.3.
We sort the basis vectors by increasing size so that the relations with fewer terms
come first. 
The shortest basis vector still has size 2 but the longest now has size 10.

From this linear basis for the nullspace we extract
a set of $S_5$-module generators for the nullspace.
For $1 \le i \le 240$, we apply all permutations to the relation corresponding
to basis vector $i$, and use this to compare the module $G_{i-1}$ generated by
the first $i{-}1$ vectors to the module $G_i$ generated by the first $i$ vectors.
In most cases we find that $G_i / G_{i-1}$ is zero:
relation $i$ is contained in the module generated by relations $1, \dots, i{-}1$.
We find that we need only 9 of the 240 basis vectors to generate the entire
nullspace as an $S_5$-module.
These basis vectors are the coefficient vectors of the relations in Definition \ref{PTSdefinition}.


\section{Universal enveloping Poisson algebras}
\label{enveloping}

In this section we show that every Poisson triple system $T$ has
a universal Poisson enveloping algebra $U(T)$ and that the natural map
$T \to U(T)$ is injective.

Let $P(T)$ be the free Poisson algebra generated by $T$
with the binary operations denoted as usual by $a \cdot b$ and $[a,b]$.
Thus $P(T)$ has a natural $\mathbb{Z}$-grading for which $T$ is the homogeneous
component of degree 1; we write $\iota\colon T \hookrightarrow P(T)$ for the inclusion.
The ternary operations in $T$ carry over in the obvious way to $\iota(T)$.
It is known that $P(T)$ is linearly isomorphic as a $\mathbb{Z}$-graded vector space
to the non-unital tensor algebra on $T$; see for example \cite{BD-PORO}.

\begin{definition}
\label{idealdefinition}
Let $I(T) \subseteq P(T)$ be the ideal generated by all elements of the following forms
where $a, b, c \in T$:
\begin{align*}
& \iota(a) \cdot \iota(b) \cdot \iota(c) - \iota( \langle a, b, c \rangle ),
\\
& [ \iota(a), \iota(b) ] \cdot \iota(c) - \iota( ( a, b, c ) ),
\\
& [ [ \iota(a), \iota(b) ], \iota(c) ] - \iota( [ a, b, c ] ),
\end{align*}
In the first terms of the generators, the operations are in $P(T)$, giving monomials of degree 3.
The second terms are elements of degree 1 in $\iota(T)$.
\end{definition}

\begin{definition}
\label{universal}
The \emph{universal Poisson envelope} of $T$ is the quotient Poisson algebra $U(T) = P(T)/I(T)$.
For $a \in T$ we write $\overline{a} = \iota(a) + I(T) \in U(T)$.
Clearly $U(T)$ is generated by the elements $\overline{a}$ for $a \in T$.
\end{definition}

\begin{lemma}
\label{U(T)lemma}
(i)
The intersection of $I(T)$ with $\iota(T) \oplus ( \iota(T) \otimes \iota(T) ) \subset P(T)$ 
is zero.
(ii)
We have the linear isomorphism $U(T) \cong \iota(T) \oplus ( \iota(T) \otimes \iota(T) )$.
\end{lemma}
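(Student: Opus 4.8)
The plan is to establish (i) first—that $I(T)$ meets the low-degree part $\iota(T) \oplus (\iota(T) \otimes \iota(T))$ trivially—and then deduce (ii) by showing that the images $\overline{a}$ of degree-1 elements together with the images $\overline{a}\,\overline{b}$ of degree-2 products span $U(T)$ and that the natural map from $\iota(T) \oplus (\iota(T) \otimes \iota(T))$ to $U(T)$ is both surjective and injective. Throughout I would use the linear isomorphism $P(T) \cong T^{\otimes \ge 1}$ (the non-unital tensor algebra on $T$), which is stated in the excerpt, so that each homogeneous component $P_n(T)$ of degree $n$ is identified with $T^{\otimes n}$ and the generators of $I(T)$ from Definition \ref{idealdefinition} can be read off degree by degree.

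The crucial observation is that the generators of $I(T)$ relate Poisson monomials of degree $3$ to elements of degree $1$, so they are \emph{not} homogeneous; this is exactly what makes $U(T)$ interesting. The first step is therefore to filter rather than grade: I would introduce the increasing filtration $F_1 \subseteq F_2 \subseteq \cdots$ of $P(T)$ where $F_k = \bigoplus_{n \le k} P_n(T)$, and track how the ideal generators move elements down by two degrees (from degree $3$ to degree $1$). To prove (i) I would take an arbitrary element $x \in I(T)$ lying in $\iota(T) \oplus (\iota(T) \otimes \iota(T))$ and write it as a $P(T)$-combination of the generators; expanding each generator and collecting terms by degree, the point is that the only way to produce something concentrated in degrees $1$ and $2$ is for the degree-$3$ leading terms to cancel among themselves, which forces the corresponding lower-degree correction terms (the $\iota(\langle a,b,c\rangle)$, $\iota((a,b,c))$, $\iota([a,b,c])$ pieces) to cancel as well—precisely because these corrections are governed by the same linear relations (the $9$ defining identities in degree $5$, and the degree-$3$ symmetries). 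Here is where I would invoke that the defining identities of a PTS are exactly the kernel of the expansion map $R$: the PTS axioms guarantee that no "unexpected" relation among the degree-$1$ correction terms can arise from cancellation in degree $3$.

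For (ii), once (i) is in hand, I would show surjectivity of $\iota(T)\oplus(\iota(T)\otimes\iota(T)) \to U(T)$ by a straightening/reduction argument: any monomial in $P(T)$ of degree $\ge 3$ can be rewritten modulo $I(T)$ as a combination of monomials of strictly smaller degree, since every degree-$3$ Poisson monomial is congruent modulo $I(T)$ to a degree-$1$ element (using the three families of generators, which cover the commutative triple product, the bracket-times-element product, and the double bracket—these generate all association types in degree $3$ by the reductions in Lemma \ref{freePoissonlemma} and Table \ref{Poissondegree5}). Iterating, everything collapses into degrees $\le 2$, giving surjectivity; injectivity is then exactly statement (i).

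\textbf{The main obstacle} I anticipate is proving surjectivity cleanly—that is, verifying that the three families of ideal generators genuinely suffice to reduce \emph{every} degree-$3$ association type (not just the three that appear literally as leading terms of the generators) down to degree $1$. In the free Poisson algebra, degree-$3$ elements live in $S(L(T))$ and decompose by partition type (as in Table \ref{Poissondegree5} specialized to degree $3$): one has the fully symmetric product $a\cdot b\cdot c$, the mixed type $[a,b]\cdot c$, and the iterated bracket $[[a,b],c]$. The generators cover exactly these three types, but I will need to check carefully that bracket-derivation identities and the Jacobi identity let me rewrite any occurrence—including those buried inside longer monomials—consistently, and that the resulting reduction is confluent so that the image in $U(T)$ is well-defined and independent of the order of reductions. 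Confluence is where the PTS axioms earn their keep: the degree-$5$ identities of Definition \ref{PTSdefinition} are precisely the compatibility conditions ensuring that two different ways of reducing a degree-$5$ monomial agree, so I expect the heart of the argument to be a diamond-lemma verification matching overlaps of the rewriting rules against the $9$ defining relations.
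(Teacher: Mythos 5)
Your overall skeleton matches the paper's: injectivity of $\iota(T)\oplus(\iota(T)\otimes\iota(T))\to U(T)$ from a degree argument for (i), and surjectivity for (ii) by collapsing everything of degree $\ge 3$ into degrees $1$ and $2$ using the three families of ideal generators. The paper, however, does far less than you propose: for (i) it simply observes that every generator of $I(T)$ has its leading term in degree $3$ and concludes that no nonzero element of the ideal can be concentrated in degrees $1$ and $2$; for (ii) it notes that in $U(T)$ every product of three generators is again a generator (together with the Remark that $[\,\overline{a}\cdot\overline{b},\overline{c}\,]=(a,c,b)+(b,c,a)$, this covers all degree-$3$ products) and then inducts on $n$ to show a product of $n\ge 3$ generators is a generator for $n$ odd and a product of two generators for $n$ even. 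Crucially, since injectivity is already supplied by (i), no well-definedness or confluence check is needed for the spanning argument, so the diamond-lemma verification you describe as ``the heart of the argument'' is not required for this lemma at all. (Your instinct that (i) secretly uses the PTS identities is sound --- a combination of generators whose degree-$\ge 3$ parts cancel leaves exactly a degree-$\le 2$ difference that the symmetries and the identities of Definition \ref{PTSdefinition} must kill, so the paper's one-line argument is terser than it looks --- but you announce this verification rather than carry it out, so as written your (i) is a plan, not a proof.)

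The concrete misstep is where you locate the ``main obstacle.'' Checking that the three generator families cover every degree-$3$ association type is automatic: by $P(T)\cong S(L(T))$ (Lemma \ref{freePoissonlemma}) the degree-$3$ component of $P(T)$ is spanned precisely by $a\cdot b\cdot c$, $[a,b]\cdot c$ and $[[a,b],c]$, which are exactly the leading terms of the generators. The place where your reduction-by-submonomial procedure actually stalls is in even degrees, e.g.\ the partition-type $22$ element $[a,b]\cdot[c,d]$ in degree $4$: it contains no degree-$3$ Poisson submonomial of any of the three types, so no rewriting rule fires until you first apply the derivation identity, $[a,b]\cdot[c,d]=[\,[a,b]\cdot c,d\,]-[\,[a,b],d\,]\cdot c$, after which each term reduces modulo $I(T)$ to degree $2$. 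This is precisely the case handled by the paper's ``product of two products of two generators'' step (and made explicit in formulas $(v)$ and $(vi)$ of the theorem that follows the lemma). Without addressing such even-degree products, your surjectivity argument is incomplete; with them, your proof collapses into the paper's.
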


\begin{proof}
From Definition \ref{idealdefinition} it is clear that every element of $I(T)$ 
contains a term of degree $n \ge 3$, which proves ($i$).
Furthermore, in $U(T)$ we have (for all $a, b, c \in T$)
\[
\overline{a} \cdot \overline{b} \cdot \overline{c}
=
\overline{ \langle a, b, c \rangle },
\qquad
[ \overline{a}, \overline{b} ] \cdot \overline{c}
=
\overline{ ( a, b, c ) },
\qquad
[ [ \overline{a}, \overline{b} ], \overline{c} ]
=
\overline{ [ a, b, c ] }.
\]
Hence every product of three generators of $U(T)$ is again a generator.
By induction it follows that in $U(T)$
every product of $n \ge 3$ generators is either a generator (for $n$ odd)
or a product of two generators (for $n$ even), which proves ($ii$).
\end{proof}

Lemma \ref{U(T)lemma} shows that we may identify $U(T)$ with $T \oplus ( T \otimes T )$.

\begin{theorem}
On $U(T)$ the commutative associative product and the Poisson bracket are defined as follows;
for simplicity we omit the bars:
\begin{alignat*}{2}
&(i) 
&\quad
a \cdot b 
&= 
\tfrac12 ( 
a \otimes b
+
b \otimes a
)
\\
&(ii)
&\quad
[ a, b ] 
&= 
\tfrac12 ( 
a \otimes b 
-
b \otimes a
)
\\
&(iii)
&\quad
a 
\cdot 
( b \otimes c )
&=
\langle a, b, c \rangle
+
( b, c, a )
\\
&(iv)
&\quad
[
a,
b \otimes c
]
&=
( a, b, c )
+
( a, c, b )
-
[ b, c, a ]
\\
&(v)
&\quad
( a \otimes b )
\cdot
( c \otimes d )
&=
\tfrac12
\big(
\langle a, b, c \rangle \otimes d +
d \otimes \langle a, b, c \rangle +
( c, d, a ) \otimes b +
b \otimes ( c, d, a ) + {}
\\
&&&
\quad\quad\;
( a, b, c ) \otimes d +
d \otimes ( a, b, c ) +
[ a, b ] \otimes [ c, d ] +
[ c, d ] \otimes [ a, b ]
\big)
\\
&(vi)
&\qquad
[ 
a \otimes b,
c \otimes d
]
&=
\tfrac{1}{2} 
\big( 
( a, c, b ) \otimes d 
+ 
d \otimes ( a, c, b ) 
+ 
( b, c, a ) \otimes d  
+ 
d \otimes ( b, c, a )
\\ 
&&&\qquad 
+
( a, d, b ) \otimes c 
+ 
c \otimes ( a, d, b ) 
+ 
( b, d, a ) \otimes c   
+ 
c \otimes ( b, d, a )
\\
&&&\qquad
+
[ a, b, c ] \otimes d + d \otimes [ a, b, c ] 
+ 
[a , b, d ] \otimes c + c \otimes [a , b, d ]
\big)
\\
&&&\quad
- [ c, d, a ] \otimes b - a \otimes [ c, d, b ] 
\end{alignat*}
\end{theorem}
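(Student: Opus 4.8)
The plan is to compute each product directly in the free Poisson algebra $P(T)$ and then project to $U(T)=P(T)/I(T)$, using Lemma \ref{U(T)lemma} to identify the quotient with $T\oplus(T\otimes T)$. Under this identification a single generator $\overline{a}$ is the degree-1 element $a\in T$, and a product of two generators is the degree-2 element obtained by depolarizing, $a\otimes b=\overline{a}\cdot\overline{b}+[\overline{a},\overline{b}]$, so that $\overline{a}\cdot\overline{b}$ and $[\overline{a},\overline{b}]$ are the symmetric and antisymmetric halves of $a\otimes b$. Formulas $(i)$ and $(ii)$ are then just the polarization identities and need no argument. For the remaining four formulas the recipe I would follow is always the same: lift the two factors to $P(T)$, expand every degree-2 factor by depolarization, multiply out, and reduce each resulting monomial modulo $I(T)$ using the three defining congruences
\[
\overline{a}\cdot\overline{b}\cdot\overline{c}=\overline{\langle a,b,c\rangle},\qquad
[\overline{a},\overline{b}]\cdot\overline{c}=\overline{(a,b,c)},\qquad
[[\overline{a},\overline{b}],\overline{c}]=\overline{[a,b,c]},
\]
together with commutativity, associativity, anticommutativity, the Jacobi identity, and the derivation (Leibniz) rule of $P(T)$.

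I expect $(iii)$ and $(iv)$ to be routine, since one factor has degree 1 and the other degree 2, so the product has degree 3 in $P(T)$ and collapses to degree 1. Writing $b\otimes c=b\cdot c+[b,c]$, formula $(iii)$ will follow from $a\cdot(b\cdot c)=\overline{\langle a,b,c\rangle}$ and $a\cdot[b,c]=[b,c]\cdot a=\overline{(b,c,a)}$. Formula $(iv)$ will come out the same way after applying the derivation rule to $[a,b\cdot c]=[a,b]\cdot c+b\cdot[a,c]$, reducing the two summands by the second congruence, and rewriting $[a,[b,c]]=-[[b,c],a]=-\overline{[b,c,a]}$ via the third. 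In both cases every monomial produced already contains a reducible cubic, so no further manipulation is needed.

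The real work is in $(v)$ and $(vi)$. Here both factors have degree 2, the product has degree 4 in $P(T)$, and after depolarizing I must treat four degree-4 terms of the shapes $(a\cdot b)\cdot(c\cdot d)$, $[a,b]\cdot(c\cdot d)$, $(a\cdot b)\cdot[c,d]$, $[a,b]\cdot[c,d]$ for $(v)$, and the analogous brackets for $(vi)$. The terms carrying at least one commutative factor reduce cleanly; for instance $(a\cdot b)\cdot(c\cdot d)=\overline{\langle a,b,c\rangle}\cdot\overline{d}$, which is then re-expanded by $(i)$. The hard part will be the purely Lie term $[a,b]\cdot[c,d]$ (and, for $(vi)$, the double bracket $[[a,b],[c,d]]$), which contains no directly reducible cubic subword. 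The device I would use is to manufacture one: the derivation rule gives
\[
[\,[a,b]\cdot c,\,d\,]=[[a,b],d]\cdot c+[a,b]\cdot[c,d],
\]
whose left side and whose first right-hand term both reduce through the congruences (via $[a,b]\cdot c=\overline{(a,b,c)}$ and $[[a,b],d]=\overline{[a,b,d]}$), leaving $[a,b]\cdot[c,d]$ expressed in degree 2. The corresponding bracket term in $(vi)$ I would handle by two applications of the Jacobi identity,
\[
[[a,b],[c,d]]=[[[a,b],c],d]-[[[a,b],d],c],
\]
each inner double bracket collapsing by the third congruence. Assembling all reductions and re-expanding every surviving product of two generators through $(i)$ and $(ii)$ should yield exactly the stated right-hand sides.

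The main obstacle is thus the bookkeeping in $(v)$–$(vi)$: organizing the many degree-4 monomials, selecting the derivation and Jacobi rearrangements that expose reducible cubics, and verifying that the symmetrized combinations collapse to precisely the tensors written in the theorem. I would also remark that consistency of the formulas is automatic: because $U(T)$ is an honest quotient Poisson algebra, the projection from $P(T)$ is well defined, so any two reduction paths of the same lifted element must agree, and no independent reverification of the identities of Definition \ref{PTSdefinition} is required at this stage.
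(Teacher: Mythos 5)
Your overall strategy---depolarize each degree-$2$ factor, expand by bilinearity, use the Leibniz and Jacobi identities of $P(T)$ to expose degree-$3$ subwords, reduce those through the generators of $I(T)$, and re-expand the surviving binary products through $(i)$ and $(ii)$---is exactly the paper's, and your treatment of $(i)$--$(iv)$ and of the three ``mixed'' terms in each of $(v)$ and $(vi)$ coincides with the paper's computation.

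The divergence, and the place where your plan as written would not land on the stated formulas, is in the two terms with no reducible cubic subword. In $(v)$ the paper does \emph{not} reduce $[a,b]\cdot[c,d]$ at all: it simply rewrites it by a formal application of $(i)$ as $\tfrac12([a,b]\otimes[c,d]+[c,d]\otimes[a,b])$, and that is literally the last pair of terms in the stated right-hand side. Your derivation-rule device $[\,[a,b]\cdot c,d\,]=[[a,b],d]\cdot c+[a,b]\cdot[c,d]$ is a correct identity and does reduce $[a,b]\cdot[c,d]$ into $T\oplus(T\otimes T)$, but it produces $\tfrac12\big((a,b,c)\otimes d-d\otimes(a,b,c)-[a,b,d]\otimes c-c\otimes[a,b,d]\big)$, a formally different expression from the one in the theorem. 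Likewise in $(vi)$ the paper expands $[[a,b],[c,d]]=[[a,[c,d]],b]+[a,[b,[c,d]]]=-[[c,d,a],b]+[[c,d,b],a]$, which is precisely what combines with the $[a\cdot b,[c,d]]$ term to give the tail $-[c,d,a]\otimes b-a\otimes[c,d,b]$; your expansion $[[[a,b],c],d]-[[[a,b],d],c]$ instead reduces to $\tfrac12([a,b,c]\otimes d-d\otimes[a,b,c]-[a,b,d]\otimes c+c\otimes[a,b,d])$, again a different element of $T\otimes T$. Your closing claim that ``any two reduction paths of the same lifted element must agree'' is true only as an equality in $U(T)=P(T)/I(T)$; to conclude that your written answers equal the stated ones as expressions in $T\oplus(T\otimes T)$ you must invoke the injectivity in Lemma \ref{U(T)lemma}$(i)$, and doing so here would force relations such as $[a,b,c]\wedge d-[a,b,d]\wedge c+[c,d,a]\wedge b-[c,d,b]\wedge a=0$ in $\Lambda^2T$, which are not consequences of the PTS axioms. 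So the choice of reduction path is not innocuous: to prove the theorem as stated you should leave $[a,b]\cdot[c,d]$ in depolarized form in $(v)$ and use the left-factor Jacobi expansion of $[[a,b],[c,d]]$ in $(vi)$, as the paper does.
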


\begin{proof}
Equations ($i$) and ($ii$) are obvious and imply that
\[
a \otimes b 
=
a \cdot b 
+
[ a, b ].
\]
To prove equations ($iii$) and ($iv$) we calculate as follows:
\begin{align*}
a 
\cdot 
( b \otimes c )
&=
a 
\cdot 
(
b \cdot c 
+
[ b, c ]
)
=
a 
\cdot 
b \cdot c 
+
a 
\cdot 
[ b, c ]
=
a 
\cdot 
b \cdot c 
+
[ b, c ]
\cdot
a 
\\
&=
\langle a, b, c \rangle
+
( b, c, a ),
\\
[
a,
b \otimes c
]
&=
[
a,
b \cdot c 
+
[ b, c ]
]
=
[
a,
b \cdot c 
]
+
[
a, 
[ b, c ]
]
=
[ a, b ] \cdot c
+
b \cdot [ a, c ]
-
[
[ b, c ],
a
]
\\
&
=
[ a, b ] \cdot c
+
[ a, c ] \cdot b
-
[ [ b, c ], a ]
=
( a, b, c )
+
( a, c, b )
-
[ b, c, a ].
\end{align*}
For equation ($v$) we calculate as follows and then apply equation ($i$):
\begin{align*}
( a \otimes b )
\cdot
( c \otimes d )
&=
( a \cdot b + [ a, b ] )
\cdot
( c \cdot d + [ c, d ] )
\\
&=
a \cdot b \cdot c \cdot d +
a \cdot b \cdot [ c, d ] +
[ a, b ] \cdot c \cdot d +
[ a, b ] \cdot [ c, d ]
\\
&=
a \cdot b \cdot c \cdot d +
[ c, d ] \cdot a \cdot b +
[ a, b ] \cdot c \cdot d +
[ a, b ] \cdot [ c, d ]
\\
&=
\langle a, b, c \rangle \cdot d +
( c, d, a ) \cdot b +
( a, b, c ) \cdot d +
[ a, b ] \cdot [ c, d ],
\end{align*}
Finally, for equation ($vi$) we first calculate as follows:
\begin{align}
\label{final}
[ a \otimes b,
c \otimes d
] &
 = 
 [ a \cdot b 
+
[ a, b ], c \cdot d 
+
[ c, d ]]
\\
&=  [ a \cdot b , c \cdot d ] + [ a \cdot b ,  [ c , d]]
+  [ [a , b ],  c \cdot d ] + [ [ a , b ],  [ c , d ]]. \notag
\end{align}
For the first term on the right side of \eqref{final} we obtain
\begin{align}
\label{one}
[ a \cdot b , c \cdot d ] 
&= 
[ a \cdot b, c] \cdot d + c \cdot [ a \cdot b, d]
\\
&= 
[a,c] \cdot b \cdot d + a \cdot [b,c] \cdot d
+
c \cdot [a,d] \cdot b + c \cdot a \cdot [b,d]
\notag
\\
&=
[a,c] \cdot b \cdot d + [b,c] \cdot a \cdot d
+
[a,d] \cdot b \cdot c + [b,d] \cdot a \cdot c
\notag
\\
&= 
( a, c, b ) \cdot d + ( b, c, a ) \cdot d + ( a, d, b ) \cdot c + ( b, d, a ) \cdot c  
\notag
\\
& = 
\tfrac{1}{2} 
\big( 
( a, c, b ) \otimes d 
+ 
d \otimes ( a, c, b ) 
+ 
( b, c, a ) \otimes d  
+ 
d \otimes ( b, c, a )
\notag
\\ 
&\qquad 
+
( a, d, b ) \otimes c 
+ 
c \otimes ( a, d, b ) 
+ 
( b, d, a ) \otimes c   
+ 
c \otimes ( b, d, a )
\big).
\notag
\end{align}
For the second term on the right side of \eqref{final} we obtain
\begin{align}
\label{two}
[ a \cdot b ,  [ c , d]] 
&=
[ a, [ c, d ] ] \cdot b + a \cdot [ b, [ c, d ] ]
=
{} - [ [ c, d ], a ] \cdot b - [ [ c, d ], b ] \cdot a 
\\
&= 
{} - [ c, d, a ] \cdot b - [ c, d, b ] \cdot a 
\notag
\\
&=
-\tfrac{1}{2} 
\big( 
[ c, d, a ] \otimes b + b \otimes [ c, d, a ] 
+ 
[ c, d, b ] \otimes a + a \otimes [ c, d, b ] 
\big).
\notag
\end{align}
From this, for the third term on the right side of \eqref{final} we obtain
\begin{align}\label{three}
[ [ a , b], c \cdot d  ] 
&= 
\tfrac{1}{2} 
\big( 
[ a, b, c ] \otimes d + d \otimes [ a, b, c ] 
+ 
[a , b, d ] \otimes c + c \otimes [a , b, d ]
\big).
\end{align}
For the fourth term on the right side of \eqref{final} we obtain
\begin{align}\label{four}
[ [ a, b ], [ c, d] ]
&=
[ [ a, [ c, d ] ], b ] + [ a, [ b, [ c, d ] ] ]
=
{} - [ [ c, d ], a ], b ] + [ [ c, d ], b ], a ]
\\
&=
{} - [ [ c, d, a ], b ] + [ [ c, d, b ], a ]
\notag
\\
&= 
-
\tfrac{1}{2} 
\big(   
[ c, d, a ] \otimes b - b \otimes [ c, d, a ]   
- 
[ c, d, b ] \otimes a + a \otimes [ c, d, b ] 
\big). 
\notag
\end{align}
Substituting the results of \eqref{one}--\eqref{four} in \eqref{final} gives equation ($vi$).
\end{proof}

\begin{theorem}
Every polynomial identity satisfied by the ternary operations
$\langle a, b, c \rangle$,  $( a, b, c )$, $[ a, b, c ]$ 
of Definition \ref{ternarydefinition}
in every Poisson algebra is a consequence of the defining identities 
for Poisson triple systems in Definition \ref{PTSdefinition}.
\end{theorem}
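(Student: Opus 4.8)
The plan is to bypass any degree-by-degree computation and instead derive the result from the faithfulness of the universal enveloping construction of Section~\ref{enveloping}; this simultaneously reproves the earlier degree-$5$ statement by a uniform, computer-free argument. Because the ground field has characteristic $0$, I would first reduce to \emph{multilinear} identities: by the usual polarization argument, both the hypothesis that an identity holds for the operations of Definition~\ref{ternarydefinition} in every Poisson algebra and the conclusion that it is a consequence of the relations in Definition~\ref{PTSdefinition} are equivalent to the corresponding statements for its multilinear components. Hence it suffices to treat multilinear identities of each degree $n$.

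I would then reformulate the multilinear case in terms of the free Poisson triple system. Let $\mathcal{F}$ be the quotient of the absolutely free algebra with three trilinear operations, taken modulo the symmetries of Lemma~\ref{degree3symmetries} and the ideal generated by all substitution instances of the relations in Definition~\ref{PTSdefinition}. By construction, a multilinear element $\phi(x_1,\dots,x_n)$ is a consequence of the defining identities if and only if its image vanishes in $\mathcal{F}$. So the theorem is equivalent to the assertion that $\mathcal{F}$ is \emph{special}, that is, its three operations arise via Definition~\ref{ternarydefinition} from a genuine Poisson algebra into which $\mathcal{F}$ embeds compatibly.

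The crucial input is exactly this specialness, which Section~\ref{enveloping} supplies. Since $\mathcal{F}$ is a Poisson triple system, Lemma~\ref{U(T)lemma}(i) shows that the canonical map $\mathcal{F}\to U(\mathcal{F})$, $x\mapsto\overline{x}$, is injective, while the relations
\[
\overline{a}\cdot\overline{b}\cdot\overline{c}=\overline{\langle a,b,c\rangle},
\qquad
[\overline{a},\overline{b}]\cdot\overline{c}=\overline{(a,b,c)},
\qquad
[[\overline{a},\overline{b}],\overline{c}]=\overline{[a,b,c]}
\]
from the proof of that lemma show that the ternary operations of Definition~\ref{ternarydefinition}, computed in the Poisson algebra $U(\mathcal{F})$, restrict on the embedded copy of $\mathcal{F}$ to the original operations of $\mathcal{F}$. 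Thus $\mathcal{F}$ is special, with enveloping Poisson algebra $U(\mathcal{F})$.

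The conclusion is then formal. If $\phi$ is a multilinear identity of degree $n$ holding for the ternary operations in every Poisson algebra, then in particular it holds in the Poisson algebra $U(\mathcal{F})$, so $\phi(\overline{x_1},\dots,\overline{x_n})=0$ when evaluated with the operations of Definition~\ref{ternarydefinition}. Restricting to $\mathcal{F}\hookrightarrow U(\mathcal{F})$ and using the compatibility just noted yields $\phi(x_1,\dots,x_n)=0$ already in $\mathcal{F}$, which by the description of $\mathcal{F}$ means precisely that $\phi$ is a consequence of the identities in Definition~\ref{PTSdefinition}. I expect the only genuinely delicate point to be the faithfulness of $\mathcal{F}\hookrightarrow U(\mathcal{F})$ together with the compatibility of the operations under this embedding; both are furnished by Lemma~\ref{U(T)lemma}, so once the problem is phrased through the free Poisson triple system the remainder of the argument is purely formal.
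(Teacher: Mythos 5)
Your argument is correct and follows essentially the same route as the paper: both proofs reduce to the free Poisson triple system and derive the conclusion from the injectivity of the canonical map into the universal enveloping Poisson algebra (Lemma \ref{U(T)lemma}) together with the compatibility of the ternary operations under that map. The only differences are cosmetic — you argue directly by evaluating the identity in $U(\mathcal{F})$, while the paper argues by contradiction and routes the evaluation through the free Poisson algebra and its universal property.
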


\begin{proof}
Suppose to the contrary that $p(a_1, \dots , a_n) \equiv 0$ is 
a (nonzero multilinear) polynomial identity 
in $n$ indeterminates which is satisfied by the ternary operations of 
Definition \ref{ternarydefinition} in every Poisson algebra
but which is not a consequence of the defining identities 
for Poisson triple systems in Definition \ref{PTSdefinition}. 
Hence $p$ is a nonzero element of the free Poisson triple system $T$ 
on $n$ generators $a_1, \dots, a_n$. 
Let $P(T)$ be the free Poisson algebra on the same $n$ generators regarded as 
a Poisson triple system according to Definition \ref{ternarydefinition}. 
Consider the morphism of Poisson triple systems $f\colon T \to P(T)$
defined in the obvious way:
\[
\langle a, b, c \rangle \mapsto a \cdot b \cdot c, 
\qquad 
( a, b, c ) \mapsto  [ a, b ] \cdot c, 
\qquad 
[a,b,c]\mapsto [ [ a, b ], c ].
\]  
Then by definition of polynomial identity we have $f(p) = 0$ where $p \ne 0$.  
Let $U(T)$ be the universal enveloping Poisson algebra of $T$ from Definition \ref{universal}.
Lemma \ref{U(T)lemma}($ii$) shows that there is an injective homomorphism of 
Poisson triple systems $j \colon T \to U(T)$. 
By the universal property of the free Poisson algebra $P(T)$, 
there is (unique) surjective homomorphism $\phi \colon P(T) \to U(T)$, 
which is the identity map on the generators $a_1, \dots, a_n$ 
and satisfies $\phi \circ f = j$. 
Since $p$ is a nonzero element of the kernel of $f$, 
we see that $f$ is not injective and hence $j$ cannot be injective.
This contradiction shows that such a polynomial identity $p$ cannot exist.
\end{proof}


\section{The operad for Poisson triple systems}
\label{operads}

\begin{definition}
We write $P(n)$ for the multilinear subspace of degree $n$ in
the free Poisson algebra on $n$ generators.
We call $n$ the \emph{arity}, and
$P(n)$ the \emph{homogeneous component} of arity $n$.
Using the bilinear operations of \emph{partial composition} \cite{BDAOAC,LV,MSS}
the following direct sum becomes the \emph{Poisson operad}:
\[
\mathbf{P} = \bigoplus_{n \ge 1} P(n)
\]
\end{definition}

\begin{definition}
\label{ntupledefinition}
For $n \ge 2$ we consider the (multilinear) monomials in arity $n$
with the identity permutation of the arguments using the isomorphism
of Lemma \ref{freePoissonlemma}.
(Table \ref{Poissondegree5} displays these monomials in arity 5.)
We obtain a set of multilinear operations of arity $n$ which generate 
a suboperad $\mathbf{N} \subseteq \mathbf{P}$.
These generating operations satisfy symmetries in arity $n$
together with relations in arities $k(n{-}1)+1$ for $k \ge 2$.
The suboperad $\mathbf{N}$ is the operad governing
\emph{Poisson $n$-tuple systems}.
\end{definition}

\begin{example}
For $n = 2$ we have the operations $a \cdot b$ and $[a,b]$ 
(with the obvious symmetries) and the relations they satisfy in arity 3
(which define Poisson algebras).
For $n = 3$ the generating operations appear in Definition \ref{ternarydefinition},
the symmetries appear in Lemma \ref{degree3symmetries}, 
and the relations in arity 5 appear in Definition \ref{PTSdefinition}.
\end{example}

\begin{definition}
(See \cite{DMR}.)
Let $\mathbf{O}$ be an operad generated by operations of arity 2.
For $d \ge 1$, the \emph{naive $d$-th Veronese power} of $\mathbf{O}$
is the suboperad of $\mathbf{O}$ consisting of the homogeneous components
of arities $kd+1$ for $k \ge 0$.
(For $d = 1$ the first Veronese power is just the operad $\mathbf{O}$ itself.)
The (non-naive) \emph{$d$-th Veronese power} of $\mathbf{O}$ is the suboperad
of $\mathbf{O}$ generated by the operations of arity $d+1$.
In general the naive and non-naive Veronese powers do not coincide
\cite[Prop.~3.2]{DMR}.
\end{definition}

\begin{remark}
According to Definition \ref{ntupledefinition} 
the operad governing Poisson $n$-tuple systems
is the non-naive $(n{-}1)$-st Veronese power of the Poisson operad.
\end{remark}

\begin{proposition}
For the Poisson operad, the naive and non-naive $d$-th Veronese powers coincide.
\end{proposition}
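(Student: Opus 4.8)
The plan is to establish the equality of the two suboperads by proving the nontrivial inclusion, the naive power being contained in the non-naive one; the opposite inclusion is immediate. Write $\mathbf{V}\subseteq\mathbf{P}$ for the non-naive $d$-th Veronese power, the suboperad generated by $P(d+1)$. Since any partial composition of operations of arity $d+1$ again has arity congruent to $1$ modulo $d$, we have $\mathbf{V}(kd+1)\subseteq P(kd+1)$ for all $k$, which is the easy inclusion. For the reverse I would argue by induction on $k$ that $\mathbf{V}(kd+1)=P(kd+1)$. The cases $k=0$ (the operadic unit) and $k=1$ (the generators) hold by definition, so I fix $k\ge 2$ and assume the statement for all smaller values. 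It then suffices to show that every element of $P(kd+1)$ is a sum of partial compositions $\beta\circ_i\gamma$ with $\beta\in P(d+1)$ and $\gamma\in P((k-1)d+1)$, since then $\gamma\in\mathbf{V}$ by the inductive hypothesis and $\beta$ is a generator.

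By Lemma \ref{freePoissonlemma} and the left-normed basis of the free Lie algebra, $P(kd+1)$ is spanned by products $w_1\cdots w_r$ of left-normed Lie words with $\sum_i|w_i|=kd+1$, so I would verify the spanning statement on these monomials. Two families are straightforward. First, for a single long bracket (partition $(kd+1)$): a left-normed bracket of length $n$ is exactly $b_{d+1}\circ_1 u$, where $b_{d+1}$ denotes the length-$(d+1)$ left-normed bracket and $u$ ranges over left-normed brackets of length $n-d$; hence, up to relabeling the arguments, the whole subspace $L(kd+1)\subseteq P(kd+1)$ is produced from $L((k-1)d+1)\subseteq\mathbf{V}$. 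Second, whenever the blocks $w_1,\dots,w_r$ can be split into a group of total length $d$ and a group of total length $(k-1)d+1$, I take $\gamma$ to be the product of the second group (arity $(k-1)d+1$, hence in $\mathbf{V}$) and plug it into a commutative-product slot $\star$ of the operation $\beta=\star\cdot\prod_{\text{first group}}w_i\in P(d+1)$; because the substitution is into a commutative factor, no Leibniz terms arise and $\beta\circ_i\gamma=w_1\cdots w_r$ on the nose. This clean case already covers the purely commutative monomial $a_1\cdots a_{kd+1}$ and every monomial admitting such a splitting.

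The remaining, genuinely delicate monomials are those, such as a product of length-$2$ blocks with $d$ odd, for which no sub-multiset of block lengths sums to $d$; here a purely tree-theoretic decomposition into $(d+1)$-ary pieces provably fails, since one meets a $K_{1,3}$-type obstruction when cutting the tree of internal nodes into connected blocks of size $d$, and the Poisson relations must intervene. For these I would choose $\beta\in P(d+1)$ that contains a Lie bracket and plug in a $\gamma\in P((k-1)d+1)$ carrying a strategically placed singleton; expanding $[\gamma,-]$ by the Leibniz rule yields the desired monomial $w_1\cdots w_r$ (the term in which the new bracket lands on that singleton) together with correction terms in which the bracket instead lengthens an already-existing block. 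The point is that the correction terms have a strictly simpler block structure, in particular one acquiring a sub-multiset of block lengths that sums to $d$, so that they fall under the clean case above or under a sub-induction. The main obstacle is exactly the bookkeeping of these Leibniz corrections: proving that for every block-length multiset there is a choice of $\beta$ and $\gamma$ whose corrections are all strictly smaller in a fixed well-ordering (for instance, induction on the number of non-singleton blocks, refined by the length of the block being grown), so that the induction closes.

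Finally, I would keep in reserve a more structural route: the Poisson operad is quadratic and Koszul and has an explicit PBW monomial basis coming from $P\cong S(L)$, so one may instead try to read off the coincidence of the two Veronese powers from the Gröbner-basis criterion of Dotsenko, Markl and Remm \cite{DMR}, bypassing the explicit correction-term analysis. I expect the direct inductive argument above to be the more self-contained one, with the control of the Leibniz corrections in the parity-obstructed cases being the only real difficulty.
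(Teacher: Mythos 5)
Your reduction is sensible and the two ``clean'' cases are handled correctly: a single left-normed Lie word of length $kd+1$ is $b_{d+1}\circ_1 u$ with $u$ of length $(k-1)d+1$, and any monomial whose block lengths admit a sub-multiset summing to $d$ is literally $\beta\circ_i\gamma$ with $\gamma$ plugged into a commutative slot. But the proof is not complete, and the gap sits exactly where you say the ``only real difficulty'' is. The obstructed monomials genuinely occur --- for $d=2$ already $[[a,b],c]\cdot[[d,e],f]\cdot[[g,h],i]$ has block lengths $(3,3,3)$ with no sub-multiset summing to $2$ --- and for these you never exhibit a concrete $\beta\in P(d+1)$ and $\gamma\in P((k-1)d+1)$, never verify that the target monomial actually appears as the ``good'' term of the Leibniz expansion (note that landing the new bracket on a singleton of $\gamma$ produces a block of length $2$, so a target all of whose blocks have length $\ge 3$ cannot be reached in one such step), and never define the well-ordering under which the correction terms are supposed to decrease. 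Since the whole content of the proposition is that these obstructed monomials are nonetheless generated, the argument as written does not establish the claim.

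For comparison, the paper sidesteps the block-length combinatorics entirely by switching to the depolarized presentation of Definition \ref{oneoperation}, in which the Poisson operad has a single binary operation and is \emph{regular} \cite{BD-PORO}: every homogeneous component is spanned by the $S_n$-orbits of the left-comb monomials. Left combs of arity $kd+1$ visibly factor through left combs of arity $d+1$, and \cite[Prop.~3.9]{DMR} turns this spanning statement into the coincidence of the naive and non-naive Veronese powers. If you want to keep your self-contained route, the decisive missing piece is a lemma of the following shape: every basis monomial of $P(kd+1)$ whose block lengths have no sub-multiset summing to $d$ lies in the span of $\{\beta\circ_i\gamma\}$ modulo monomials that do admit such a splitting; your sketch gestures at this but does not prove it. Your ``reserve'' route via Gr\"obner bases should also be treated with care: the paper's Question \ref{koszulquestion} records that the relevant Gr\"obner-basis hypothesis of \cite[Theorem~3.13]{DMR} fails for the Poisson operad, so not every criterion from that paper is available here.
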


\begin{proof}
We use Definition \ref{oneoperation} of Poisson algebras (one nonassociative operation).
Since the Poisson operad is regular \cite{BD-PORO}, 
every homogeneous component has a basis consisting of the left-normed monomials 
with all permutations of the arguments.
Hence every homogeneous component is spanned by
the orbits of the left-comb products under the action of
the symmetric group.
Now apply \cite[Prop.~3.9]{DMR}.
\end{proof}

\begin{lemma}
\label{quadraticlemma}
The Poisson operad has a quadratic Gr\"obner basis.
\end{lemma}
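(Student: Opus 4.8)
The plan is to exploit the one-operation presentation of the Poisson operad from Definition~\ref{oneoperation}, which exhibits $\mathbf{P}$ as a binary quadratic operad: it is generated by a single nonsymmetric binary operation $\mu(a,b)=ab$ (spanning the regular $S_2$-module $\langle ab, ba\rangle$) subject to the single arity-$3$ relation displayed there, together with its $S_3$-images. In the free operad $\mathcal{F}$ on $\mu$ the arity-$3$ component has dimension $12$ (two planar shapes, the left comb $(ab)c$ and the right comb $a(bc)$, each with $3!$ leaf labelings), while $\dim P(3)=6$, so the relation module $R=\ker(\mathcal{F}(3)\to P(3))$ has dimension $6$. I would pass to the shuffle operad following the Gr\"obner-basis formalism for operads \cite{LV} and fix an admissible monomial order in which, in arity $3$, the right-comb (depth-two, second-slot) tree monomials dominate the left-comb monomials. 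Written out, the defining relation expresses the right comb $a(bc)$ as a linear combination of left combs $(xy)z$, so under this order its leading monomial is a right comb; running over the $S_3$-orbit, the six-dimensional module $R$ has as its leading monomials exactly the six arity-$3$ right-comb tree monomials. Equivalently: since the six left combs are linearly independent in $P(3)$, one has $\mathcal{F}(3)=R\oplus\langle\text{left combs}\rangle$, and the projection of $R$ onto the span of the right combs is an isomorphism, so the leading terms fill out all right combs.

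The key step is then a dimension argument that upgrades ``left combs are a basis'' into ``the quadratic relations already form a Gr\"obner basis.'' A tree monomial fails to be a left comb precisely when some internal vertex has an internal right child, and the smallest such configuration is exactly an arity-$3$ right comb; hence the set $L$ of left-comb tree monomials is exactly the set of monomials \emph{not} divisible by any arity-$3$ right comb, and $L$ is closed under operadic divisors. Let $I$ be the ideal generated by $R$ and let $G$ be the reduced Gr\"obner basis for the chosen order. Because $\mathrm{in}(R)$ consists of the arity-$3$ right combs, every monomial divisible by one of them is a leading monomial, so the normal monomials $N(G)$ satisfy $N(G)\subseteq L$. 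But the preceding Proposition, via regularity of the Poisson operad \cite{BD-PORO}, shows that the left-normed monomials with all permutations---precisely the monomials in $L$---form a basis of $P(n)$ for every $n$; and $N(G)$ is also a basis of $\mathbf{P}$. Comparing dimensions arity by arity forces $N(G)=L$, whence $\mathrm{in}(I)$ is the monomial ideal generated by the arity-$3$ right combs. The minimal generators of $\mathrm{in}(I)$ are therefore all quadratic, so $G$ is supported in arity $3$, and the Gr\"obner basis is quadratic.

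The main obstacle is verifying that one may genuinely choose an admissible order realizing the right-comb monomials as the leading terms in arity $3$ while keeping $L$ as an order ideal; once such an order is in hand, the conclusion is forced by the dimension count inherited from \cite{BD-PORO}, and no arity-$4$ computation is needed. Should the existence of such an order be in doubt, the alternative is the direct Buchberger route: orient the quadratic relations and check that all critical pairs---the overlaps of two relations, which live in arity $4$---reduce to zero, in which case the obstacle becomes the confluence check in arity $4$, consistent with the $4!=24$ left-comb monomials forming a basis of $P(4)$. Either way, a quadratic Gr\"obner basis re-proves that $\mathbf{P}$ is Koszul \cite{LV}, which is what the operadic discussion of Veronese powers requires.
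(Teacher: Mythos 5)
The paper's entire proof of this lemma is the citation \cite[Theorem~2.1]{Dotsenko2020}, so the question is whether your argument is actually self-contained. Its skeleton is correct and standard: if the tree monomials avoiding the leading terms of the arity-$3$ relations already number $\dim P(n)=n!$ in every arity, then no higher-arity Gr\"obner basis elements can exist (they would cut the normal monomials below the dimension of the quotient), and your appeal to regularity of the Poisson operad \cite{BD-PORO} correctly supplies the $n!$ left-normed monomials as a basis of $P(n)$. But the step you yourself label ``the main obstacle''---the existence of an \emph{admissible} order on shuffle tree monomials whose initial terms in arity $3$ are exactly the six non-left-normed monomials---is the entire mathematical content of the lemma, and you never supply it. Your bookkeeping of ``left combs'' versus ``right combs'' is carried out in the symmetric-operad picture, whereas Gr\"obner bases live in the shuffle-operad picture, and there the six desired leading terms $a(bc)$, $a(cb)$, $b(ac)$, $b(ca)$, $c(ab)$, $c(ba)$ do not form a single shape: for the shuffle tree whose internal subtree carries leaves $\{2,3\}$, the two root decorations $\mu(x_1,x_2)$ and $\mu(x_2,x_1)$ give $a(bc)$ and $(bc)a$ respectively, and the order must prefer the former; for the shape whose internal subtree carries leaves $\{1,2\}$ the same two root decorations give $(ab)c$ and $c(ab)$, and the order must prefer the latter. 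So the required order must rank the two root decorations in opposite ways depending on which input the internal subtree occupies, and it is exactly the construction of such an admissible ordering (via word operads) that constitutes Dotsenko's theorem. Your fallback of orienting the relations and checking arity-$4$ critical pairs has the same defect---the orientation must come from an admissible order before confluence even makes sense---and is likewise not carried out.

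In short: right framework, missing theorem-bearing step. To make this a proof you would have to exhibit the admissible ordering (e.g.\ reproduce the one from \cite{Dotsenko2020}) and verify that its arity-$3$ initial terms are as you claim, after which your dimension count does finish the job; otherwise the honest proof is the paper's one-line citation of \cite[Theorem~2.1]{Dotsenko2020}.
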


\begin{proof}
See \cite[Theorem~2.1]{Dotsenko2020}.
\end{proof}

\begin{corollary}
The Poisson operad is a Koszul operad.
\end{corollary}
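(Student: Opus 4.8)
The plan is to derive the corollary from the operadic analogue of Priddy's theorem, namely that an operad admitting a quadratic Gr\"obner basis is Koszul (see Loday--Vallette \cite{LV}, and Dotsenko--Khoroshkin). The entire input to this general theorem is supplied by Lemma \ref{quadraticlemma}, so the work consists in verifying that its hypotheses are met and then invoking it; no new computation is required.

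First I would confirm that the Poisson operad is quadratic in the operadic sense. By Definition \ref{oneoperation}, Poisson algebras are governed by a single binary operation subject to one relation of arity $3$; hence $\mathbf{P} = \mathcal{F}(E)/(R)$ with generators $E$ concentrated in arity $2$ and relations $R$ concentrated in arity $3$. This is exactly the setting in which a quadratic Gr\"obner basis is defined, and it guarantees that the Koszul dual cooperad and the Koszul complex exist.

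Next I would recall the mechanism behind the general theorem, which proceeds in two steps. After fixing an admissible order on the tree-monomials of the free operad $\mathcal{F}(E)$, a quadratic Gr\"obner basis produces a quadratic \emph{monomial} operad $\mathbf{P}^{\mathrm{lead}} = \mathcal{F}(E)/(\mathrm{lt}(R))$ whose relations are the leading tree-monomials, and the tree-monomials divisible by no leading term furnish a common PBW-type basis of $\mathbf{P}$ and of $\mathbf{P}^{\mathrm{lead}}$ arity by arity. One then uses that every quadratic monomial operad is automatically Koszul, its Koszul complex splitting along tree-monomials, and transfers this to $\mathbf{P}$ by a filtration argument: the monomial order filters the Koszul complex of $\mathbf{P}$ so that the associated graded is the Koszul complex of $\mathbf{P}^{\mathrm{lead}}$, which is acyclic in the relevant degrees, forcing acyclicity for $\mathbf{P}$ as well. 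Applying this to the quadratic Gr\"obner basis of Lemma \ref{quadraticlemma} yields the Koszulness of $\mathbf{P}$.

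The step I expect to demand the most care is bookkeeping rather than new mathematics: ensuring that the admissible order and conventions under which Lemma \ref{quadraticlemma} produces its quadratic Gr\"obner basis agree with those assumed in the general theorem, and that the single arity-$3$ relation of Definition \ref{oneoperation} is the one whose leading term generates $\mathbf{P}^{\mathrm{lead}}$. Since Lemma \ref{quadraticlemma} already asserts that a quadratic Gr\"obner basis exists, this compatibility is effectively built in, and the corollary follows at once.
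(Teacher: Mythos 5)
Your proposal is correct and follows essentially the same route as the paper: both arguments simply combine Lemma \ref{quadraticlemma} with the general theorem that an operad admitting a quadratic Gr\"obner basis is Koszul (the paper cites \cite[Theorem 6.3.3.2]{BDAOAC}, while you sketch the underlying PBW/associated-graded mechanism). The extra detail you give about the monomial operad $\mathbf{P}^{\mathrm{lead}}$ and the filtration argument is an accurate unpacking of that cited theorem rather than a different proof.
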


\begin{proof}
An operad with a quadratic Gr\"obner basis is Koszul \cite[Theorem 6.3.3.2]{BDAOAC}.
\end{proof}

\begin{question}
\label{koszulquestion}
Is the operad governing Poisson triple systems a Koszul operad?
At first glance, it looks as though a positive answer follows from 
\cite[Theorem~3.13]{DMR} since
($i$)
as a weight-graded operad the Poisson operad $\mathbf{P}$ is generated by elements 
of weight 1 (operations of arity 2), and
($ii$)
by Lemma \ref{quadraticlemma} we know that the ideal of relations of $\mathbf{P}$ has 
a quadratic Gr\"obner basis.
However, the remaining hypothesis of \cite[Theorem~3.13]{DMR} does not hold:
it is not true (in the case $d = 2$) that all weight $2d$ normal tree monomials 
with respect to that Gr\"obner basis belong to the $d$-th Veronese power of the free operad.
If this hypothesis were true then it would follow by \cite[Theorem~3.13]{DMR} that 
$\mathbf{P}^{[d]}$ admits a quadratic Gr\"obner basis and hence that $\mathbf{P}^{[d]}$ 
is Koszul.
Vladimir Dotsenko provided us with the following counter-example:
the element $( a ( [b c] [d e] ) )$ is normal of weight 4 but is not 
in the Veronese square of the free operad. 
So this question remains open.
\end{question}

\begin{question}
Livernet \& Loday showed that the Poisson operad
can be expressed as a limit of associative operads; see \cite{MR}. 
Can the operad for Poisson $n$-tuple systems be expressed as a limit of operads
for associative $n$-tuple systems \cite{Carlsson}?
\end{question}

\subsubsection*{Acknowledgement}

We thank Vladimir Dotsenko for providing us with the counter-example in
Question \ref{koszulquestion}.




\begin{thebibliography}{99}

\bibitem{BDAOAC}
Bremner, Murray R.; Dotsenko, Vladimir: 
Algebraic operads. An algorithmic companion. 
CRC Press, Boca Raton, FL, 2016. 

\bibitem{BD-PORO}
Bremner, Murray; Dotsenko, Vladimir:
Classification of regular parametrized one-relation operads.
Canad. J. Math. 69 (2017), no. 5, 992--1035. 

\bibitem{BSO}
Bremner, Murray R.; S\'anchez-Ortega, Juana:
Leibniz triple systems.
Commun. Contemp. Math. 16 (2014), no. 1, 1350051, 19 pp.

\bibitem{Carlsson}
Carlsson, Renate:
$n$-ary algebras. 
Nagoya Math. J. 78 (1980), 45--56. 

\bibitem{Chapoton}
Chapoton, Fr\'ed\'eric:
On a Hopf operad containing the Poisson operad.
Algebr. Geom. Topol. 3 (2003), 1257--1273.

\bibitem{Crainic1}
Crainic, Marius; Fernandes, Rui Loja:
Integrability of Poisson brackets.
J. Differential Geom. 66 (2004), no. 1, 71--137. 

\bibitem{Dotsenko-2007}
Dotsenko, Vladimir:
An operadic approach to deformation quantization of compatible Poisson brackets. I. 
J. Gen. Lie Theory Appl. 1 (2007), no. 2, 107--115.

\bibitem{Dotsenko2020}
Dotsenko, Vladimir:
Word operads and admissible orderings.
Appl. Categ. Structures 28 (2020), no. 4, 595--600.

\bibitem{DMR}
Dotsenko, Vladimir; Markl, Martin; Remm, Elisabeth:
Veronese powers of operads and pure homotopy algebras.
Eur. J. Math. 6 (2020), no. 3, 829--863.

\bibitem{Fernandes} 
Fernandes, Rui Loja:
Connections in Poisson geometry. I. Holonomy and invariants.
J. Differential Geom. 54 (2000), no. 2, 303--365.

\bibitem{Fresse}
Fresse, Benoit:
Th\'eorie des op\'erades de Koszul et homologie des alg\`ebres de Poisson.
[Theory of Koszul operads and homology of Poisson algebras] 
Ann. Math. Blaise Pascal 13 (2006), no. 2, 237--312. 

\bibitem{GR2008}
Goze, Michel; Remm, Elisabeth:
Poisson algebras in terms of non-associative algebras.
J. Algebra 320 (2008), no. 1, 294--317.

\bibitem{KOSMANN}
Kosmann-Schwarzbach, Yvette:
From Poisson algebras to Gerstenhaber algebras. 
Ann. Inst. Fourier (Grenoble) 46 (1996), no. 5, 1243--1274.

\bibitem{Laurent-Gengoux}
Laurent-Gengoux, Camille; Pichereau, Anne; Vanhaecke:
Poisson structures. 
Grundlehren der mathematischen Wissenschaften 
[Fundamental Principles of Mathematical Sciences], 347. 
Springer, Heidelberg, 2013.

\bibitem{Lichnerowicz}
Lichnerowicz, Andr\'e:
Les vari\'et\'es de Poisson et leurs alg\`ebres de Lie associ\'ees.
J. Differential Geometry 12 (1977), no. 2, 253--300. 

\bibitem{ListerLie}
Lister, William G.
A structure theory of Lie triple systems. 
Trans. Amer. Math. Soc. 72 (1952), 217--242.

\bibitem{Lister}
Lister, W. G.:
Ternary rings. 
Trans. Amer. Math. Soc. 154 (1971), 37--55. 

\bibitem{LV}
Loday, Jean-Louis; Vallette, Bruno:
Algebraic operads. 
Grundlehren der mathematischen Wissenschaften 
[Fundamental Principles of Mathematical Sciences], 346. 
Springer, Heidelberg, 2012.

\bibitem{MLS2012}
Makar-Limanov, Leonid; Shestakov, Ivan:
Polynomial and Poisson dependence in free Poisson algebras and free Poisson fields.
J. Algebra 349 (2012), 372--379.

\bibitem{MLU2007}
Makar-Limanov, Leonid; Umirbaev, Ualbai:
Centralizers in free Poisson algebras.
Proc. Amer. Math. Soc. 135 (2007), no. 7, 1969--1975.

\bibitem{MR}
Markl, M.; Remm, E.:
Algebras with one operation including Poisson and other Lie-admissible algebras. 
J. Algebra 299 (2006), no. 1, 171--189.

\bibitem{MSS}
Markl, Martin; Shnider, Steve; Stasheff, Jim:
Operads in algebra, topology and physics.
Mathematical Surveys and Monographs, 96. 
American Mathematical Society, Providence, RI, 2002. 

\bibitem{Marsden} 
Marsden, Jerrold E.; Weinstein, Alan:
The Hamiltonian structure of the Maxwell-Vlasov equations. 
Phys. D 4 (1981/82), no. 3, 394--406. 

\bibitem{Mishchenko}
Mishchenko, S. P.; Petrogradsky, V. M.; Regev, A.:
Poisson PI algebras.
Trans. Amer. Math. Soc. 359 (2007), no. 10, 4669--4694.

\bibitem{Neher}
Neher, Erhard:
Jordan triple systems by the grid approach. 
Lecture Notes in Mathematics, 1280. 
Springer-Verlag, Berlin, 1987.

\bibitem{Poisson} 
Poisson, Sim\'eon Denis:
M\'emoire sur la variation des constantes arbitraires dans les questions de m\'ecanique.
Journal de l'\'Ecole polytechnique, 15e cahier, 8 (1809), 266--344.

\bibitem{Reutenauer}
Reutenauer, Christophe:
Free Lie algebras.
London Mathematical Society Monographs. New Series, 7.
Oxford Science Publications.
The Clarendon Press, Oxford University Press, New York, 1993.

\bibitem{Shestakov}
Shestakov, I. P.:
Quantization of Poisson superalgebras and the specialty of Jordan superalgebras of Poisson type.
Algebra i Logika 32 (1993), no. 5, 571--584 (1994); translation in 
Algebra and Logic 32 (1993), no. 5, 309--317 (1994).

\bibitem{Weinstein}
Weinstein, Alan:
The local structure of Poisson manifolds.
J. Differential Geom. 18 (1983), no. 3, 523--557.

\bibitem{ZCB}
Zhang, Zerui; Chen, Yuqun; Bokut, Leonid A.:
Some algorithmic problems for Poisson algebras.
J. Algebra 525 (2019), 562--588.

\end{thebibliography}
\end{document}